\newtheorem{lem}{Lemma}[section]
\newtheorem{prop}{Proposition}[section]
\newtheorem{cor}{Corollary}[section]
\newtheorem{thm}{Theorem}[section]
\newfont{\sBlackboard}{msbm10 scaled 900}
\newcommand{\dd}     {{\rm d}}
\newcommand{\mylabel}[1]{\label{#1}
            \ifx\undefined\stillediting
            \else \fbox{$#1$}\fi }
\newcommand{\BE}{\begin{equation}}
\newcommand{\EEQ}{\end{equation}}
\newcommand{\rfb}[1]{\mbox{\rm
   (\ref{#1})}\ifx\undefined\stillediting\else:\fbox{$#1$}\fi}
\newfont{\Blackboard}{msbm10 scaled 1200}
\newcommand{\bl}[1]{\mbox{\Blackboard #1}}
\newfont{\roma}{cmr10 scaled 1200}
\def\CC{\rm \hbox{C\kern-.56em\raise.4ex
         \hbox{$\scriptscriptstyle |$}\kern+0.5 em }}
\newcommand{\ud}{\mathrm{d}}
\newcommand{\be}{\begin{equation}}
\newcommand{\ee}{\end{equation}}
\newcommand{\beq}{\begin{eqnarray}}
\newcommand{\eeq}{\end{eqnarray}}
\newcommand{\beqs}{\begin{eqnarray*}}
\newcommand{\eeqs}{\end{eqnarray*}}
\newcommand{\bt}{\begin{thm}}
\newcommand{\et}{\end{thm}}
\newcommand{\br}{\begin{remark}}
\newcommand{\er}{\end{remark}}
\newcommand{\bc}{\begin{cor}}
\newcommand{\ec}{\end{cor}}
\newcommand{\el}{\end{lem}}
\newcommand{\bd}{\begin{definition}}
\newcommand{\ed}{\end{definition}}
\newcommand{\mm}    {{\hbox{\hskip 0.5pt}}}
\newcommand{\bluff} {{\hbox{\raise 15pt \hbox{\mm}}}}
\def\section{\@startsection {section}{1}{\z@}{-3.5ex plus -1ex minus
    -.2ex}{2.3ex plus .2ex}{\large\bf}}
\def\be{\begin{equation}}
\def\ee{\end{equation}}
\def\ds{\displaystyle}
\newcommand{\re}{\mathrm{Re}}
\newcommand{\e}{\mathrm{e}}
\newcommand{\R}{\bl{R}}
\newcommand{\N}{\bl{N}}
\newcommand{\op}{\mathrm{op}}
\begin{document}

\thispagestyle{empty}
\title[Wave equation with singular Kelvin-Voigt damping]{Stabilization for the wave equation with singular Kelvin-Voigt damping}
\author{Ka\"{\i}s AMMARI}
\address{UR Analysis and Control of Pde, UR 13ES64, Department of Mathematics, Faculty of Sciences of Monastir, University of Monastir, 5019 Monastir, Tunisia} 
\email{kais.ammari@fsm.rnu.tn} 

\author{Fathi HASSINE}
\address{UR Analysis and Control of Pde, UR 13ES64, Department of Mathematics, Faculty of Sciences of Monastir, University of Monastir, 5019 Monastir, Tunisia}
\email{fathi.hassine@fsm.rnu.tn}

\author{Luc ROBBIANO}
\address{Laboratoire de Math\'ematiques, Universit\'e de Versailles Saint-Quentin en Yvelines, 78035 Versailles, France}
\email{luc.robbiano@uvsq.fr}


\begin{abstract}
We consider the wave equation with Kelvin–Voigt damping in a bounded domain. The exponential stability result proposed by Liu and Rao \cite{liu-rao2} or T\'ebou \cite{tebou} for that system assumes that the damping is localized in a neighborhood of the whole or a part of the boundary under some consideration. In this paper we propose to deal with this geometrical condition by considering a singular Kelvin-Voigt damping which is localized faraway from the boundary. In this particular case it was proved by Liu and Liu \cite{liu-liu} the lack of the uniform decay of the energy. However, we show that the energy of the wave equation decreases logarithmically to zero as time goes to infinity. Our method is based on the frequency domain method. The main feature of our contribution is to write the resolvent problem as a transmission system to which we apply a specific Carleman estimate. 
\end{abstract}

\subjclass[2010] {35A01, 35A02, 35M33, 93D20}
\keywords{Carleman estimate, stabilization, wave equation, Singular Kelvin-Voigt damping}

\maketitle

\tableofcontents

\section{Introduction and main results}
\setcounter{equation}{0}
There are several mathematical models representing physical damping. The most often encountered type of damping in vibration studies are linear viscous damping \cite{ammari-niciase, blr, lebeau, lebeau-robbiano2} and Kelvin-Voigt damping \cite{hassine1,liu-liu, liu-rao1,liu-rao2} which are special cases of proportional damping. Viscous damping usually models external friction forces such as air resistance acting on the vibrating structures and is thus called "external damping", while Kelvin-Voigt damping originate from the internal friction of the material of the vibrating structures and thus called "internal damping" or "material damping". This type of material is encountered in real life when one uses patches to suppress vibrations, the modeling aspect of which may be found in \cite{banks-smith-wang}. This type of question was examined in the one-dimensional setting in \cite{liu-liu} where it was shown that the longitudinal motion of an Euler-Bernoulli beam modeled by a locally damped wave equation with Kelvin-Voigt damping is not exponentially stable when the junction between the elastic part and the viscoelastic part of the beam is not smooth enough. Later on, the wave equation with Kelvin-Voigt damping in the multidimensional setting was examined in \cite{liu-rao2}; in particular, those authors showed the exponential decay of the energy by assuming that the damping region is a neighborhood of the whole boundary. Later on, it was shown that the exponential decay of the energy could be obtained with just imposing that the damping is a neighborhood of part of the boundary \cite{tebou}.

Let $\Omega \subset \R^{n}$, $n \geq 2,$ be a bounded domain with a sufficiently smooth boundary $\Gamma=\partial \Omega$. Let $\omega$ be an no empty and open subset of $\Omega$ with smooth boundary $\mathcal{I}=\partial\omega$ (see Figure \ref{fig1}). 

Consider the damping wave system
\begin{equation}
\label{wave1}
\partial_t^2 u - \Delta u - \, \mathrm{div}(a(x) \, \nabla \partial_t u) = 0, \, \Omega \times (0,+\infty), 
\end{equation}
\begin{equation}
\label{wave2}
u = 0, \, \partial \Omega \times (0,+\infty),
\end{equation}
\begin{equation}
\label{wave3}
u(x,0) = u^0, \, \partial_t u(x,0) = u^1 (x), \, \Omega,
\end{equation}
where $a(x)=d\,\mathbb{1}_{\omega}(x)$ and $d>0$ is a constant.

\medskip

 
\begin{figure}[htbp]
\includegraphics[scale=0.8]{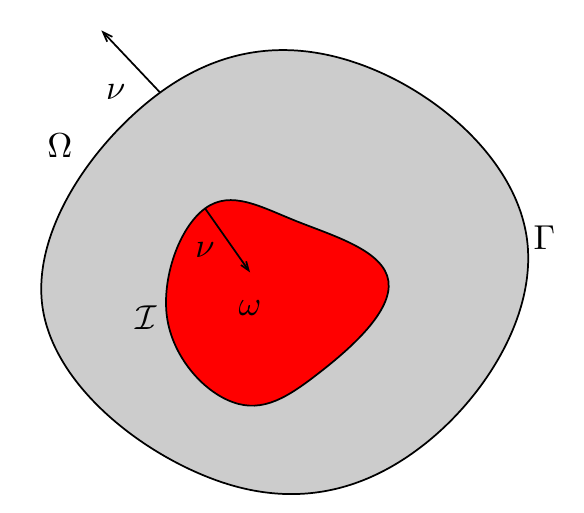} 
\caption{The domain $\Omega$}
\label{fig1}
\end{figure}

System \eqref{wave1}-\eqref{wave3}, involving a constructive viscoelastic damping $\mathrm{div}(a(x)\nabla u_{t})$, models the vibrations of an elastic body which has one part made of viscoelastic material. In the case of global viscoelastic damping $(a>0$), the wave equation \eqref{wave1}-\eqref{wave3} generates an analytic semigroup, and the spectrum of which is contained in a sector of the left half complex plan (see \cite{chen-liu-liu}). While the situation of local viscoelastic damping is more delicate due to the unboundedness of the viscoelastic damping and the discontinuity of the materials.

In \cite{liu-liu}, it was proved that the energy of an one-dimensional wave equation with local viscoelastic damping does not decay uniformly if the damping coefficient $a$ is discontinuous across the interface of the materials. Because of the discontinuity of the materials across the interface, the dissipation is badly transmitted from the viscoelastic region to the elastic region, where the energy decays slowly. Nevertheless, this does not contradict the well-known ``geometric optics'' condition in \cite{blr}, since the viscoelastic damping is unbounded in the energy space. The loss of uniform stability is caused by the discontinuity of material properties across the interface and the unboundedness of the viscoelastic damping. In this paper, we prove a logarithmically decay of energy. Our idea is transform the resolvent problem of system \eqref{wave1}-\eqref{wave2} to a transmission system to be able to quantify the discontinuity of the material properties across the interface through the so-called Carleman estimate. Noting that recently the same problem was treated in \cite{hassine1} where it was proved that the energy is polynomially decreases over the time but only on one-dimensional case (even for a transmission system).

We define the natural energy of $u$ solution of \rfb{wave1}-\rfb{wave3} at instant $t$ by
\begin{equation*}
E(u,t)=
\frac{1}{2} \left\|(u(t),\partial_t u(t))\right\|_{H^1_0 (\Omega) \times L^2(\Omega)}^2, \, \forall \, t \geq 0.
\end{equation*}
Simple formal calculations give
\begin{equation*}
E(u,0)-E(u,t)= - \, d \,\int_{0}^{t} \int_{\omega}\left|\nabla\partial_{t} u(x,s)\right|^2 \,\ud x\,\ud s,\forall t\geq 0, 
\end{equation*}
and therefore, the energy is non-increasing function of the time variable $t$.

\medskip

\begin{thm}\label{LogStab}
For any $k\in\N^*$ there exists $C>0$ such that for any initial data $(u^{0},u^{1})\in\mathcal{D}(\mathcal{A}^{k})$ the solution $u(x,t)$ of \eqref{wave1} starting from $(u^{0},u^{1})$ satisfying
$$E(u,t)\leq\frac{C}{(\ln(2+t))^{2k}}\|(u^{0},u^{1})\|_{\mathcal{D}(\mathcal{A}^{k})}^{2},\quad\forall\,t>0,$$
where $(\mathcal{A}, \mathcal{D}(\mathcal{A}))$ is defined in Section \ref{wellposed}.
\end{thm}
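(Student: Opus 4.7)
The overall strategy is to reduce the decay statement to a high-frequency resolvent estimate on the imaginary axis and then to derive that estimate from a global Carleman inequality adapted to the transmission structure forced by the jump of $a$ across $\mathcal{I}$. By the semigroup characterization of Batty--Duyckaerts (refining the earlier logarithmic theorem of Burq), the bound
\[\|(i\lambda-\mathcal{A})^{-1}\|_{\mathcal{L}(\mathcal{H})}\leq C\,e^{C|\lambda|},\qquad |\lambda|\geq\lambda_{0},\]
together with the inclusion $i\R\subset\rho(\mathcal{A})$, is equivalent to the logarithmic decay rate claimed in the theorem. The absence of purely imaginary spectrum is a Fredholm-plus-unique-continuation question for the stationary transmission problem and is essentially standard once the large-frequency machinery is in place, so the crux is the exponential bound at high frequency.

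Writing $U=(u,v)$ and $F=(f,g)\in\mathcal{H}$ with $(i\lambda-\mathcal{A})U=F$, one has $v=i\lambda u-f$ and the elliptic-type equation
\[-\lambda^{2}u-\Delta u-i\lambda\,\mathrm{div}(a\,\nabla u)=g+i\lambda f-\mathrm{div}(a\,\nabla f)\text{ in }\Omega,\qquad u|_{\Gamma}=0.\]
Since $a=d\,\mathbb{1}_{\omega}$, I would decompose $u_{1}=u|_{\Omega\setminus\overline{\omega}}$, $u_{2}=u|_{\omega}$ and view the problem as the transmission system
\[-\lambda^{2}u_{1}-\Delta u_{1}=g+i\lambda f\text{ in }\Omega\setminus\overline{\omega},\qquad -\lambda^{2}u_{2}-(1+i\lambda d)\Delta u_{2}=G\text{ in }\omega,\]
coupled by $u_{1}=u_{2}$ and $\partial_{\nu}u_{1}=(1+i\lambda d)\partial_{\nu}u_{2}$ on $\mathcal{I}$, where $\nu$ is the outward normal to $\omega$. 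The novel feature is the complex coefficient $(1+i\lambda d)$ in front of $-\Delta u_{2}$ and in the interface condition; its modulus grows like $|\lambda|$ and is the quantity whose size must be tracked through every estimate.

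The analytical heart of the proof is a Carleman estimate for this parameter-dependent transmission system. I would construct a single weight $\varphi$ of Le Rousseau--Robbiano type, built as $\varphi=e^{\alpha\psi}$ on each side from functions $\psi_{j}$ without critical points in $\overline{\Omega}\setminus\omega$, engineered so that $\partial_{\nu}\varphi$ has the sign prescribed by the positive Lopatinskii--type transmission condition at $\mathcal{I}$ and so that strict pseudoconvexity holds for both operators on each side. Careful bookkeeping of the $\lambda$-dependence should yield an inequality of the schematic form
\[\tau^{3}\|e^{\tau\varphi}u\|_{L^{2}(\Omega)}^{2}+\tau\|e^{\tau\varphi}\nabla u\|_{L^{2}(\Omega)}^{2}\leq C\bigl(\|e^{\tau\varphi}(g+i\lambda f-\mathrm{div}(a\,\nabla f))\|_{L^{2}}^{2}+\tau^{3}\|e^{\tau\varphi}u\|_{L^{2}(\omega)}^{2}+\tau\|e^{\tau\varphi}\nabla u\|_{L^{2}(\omega)}^{2}\bigr),\]
valid for $\tau\geq C_{0}|\lambda|^{2}$. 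The observation terms on $\omega$ are then controlled by the dissipation identity $\mathrm{Re}\langle\mathcal{A}U,U\rangle_{\mathcal{H}}=-d\,\|\nabla v\|_{L^{2}(\omega)}^{2}$: writing $\nabla v=i\lambda\nabla u-\nabla f$ gives $\|\nabla u\|_{L^{2}(\omega)}\leq C|\lambda|^{-1}\|F\|_{\mathcal{H}}^{1/2}\|U\|_{\mathcal{H}}^{1/2}+|\lambda|^{-1}\|\nabla f\|_{L^{2}(\omega)}$, with an analogous $L^{2}$ control via Poincar\'e. Choosing $\tau$ of order $|\lambda|^{2}$, the exponential factor $e^{c\tau}\gtrsim e^{c|\lambda|^{2}}$ on the left-hand side dominates any algebraic loss and lets me absorb the $\|U\|_{\mathcal{H}}$ contributions, giving $\|U\|_{\mathcal{H}}\leq C\,e^{C|\lambda|}\|F\|_{\mathcal{H}}$, which is the desired bound.

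The main obstacle is the Carleman step itself: one has to exhibit a single weight that is strictly pseudoconvex simultaneously for both of the $\lambda$-dependent operators, meets the transmission Lopatinskii condition at $\mathcal{I}$ even though the coefficient jump depends on $\lambda$, and allows propagation of the estimate up to $\Gamma$ where the Dirichlet condition supplies the needed boundary terms. Once that inequality is established with a sharp tracking of the powers of $\lambda$ and $\tau$, the remaining pieces — the Batty--Duyckaerts reduction, the dissipation identity, low-frequency invertibility, and unique continuation — fall into place by now-standard arguments, and Theorem~\ref{LogStab} follows.
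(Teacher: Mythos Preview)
Your overall architecture is right and matches the paper: reduce to an exponential resolvent bound on $i\R$ and invoke Burq/Batty--Duyckaerts, get the bound from a Carleman estimate for the transmission problem, and feed in the dissipation identity to control what happens on $\omega$. But two concrete steps would not close as you have written them.

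\textbf{The transmission condition.} In your variables the Neumann jump on $\mathcal{I}$ is $\partial_{\nu}u_{1}-(1+i\lambda d)\partial_{\nu}u_{2}=0$, i.e.\ $\op(B_{2})u=i\lambda d\,\partial_{\nu}u_{2}$. In a Carleman estimate of Le Rousseau--Robbiano type this contributes $\tau|\lambda|^{2}\|e^{\tau\varphi}\partial_{\nu}u_{2}\|_{L^{2}(\mathcal{I})}^{2}$ to the right-hand side, and that normal trace is an $H^{2}$-level quantity that the dissipation identity does \emph{not} supply: the identity only gives $\|\nabla u\|_{L^{2}(\omega)}$. The paper removes this obstruction by the change of unknown $w_{1}=(1+id\mu)u_{1}+df_{1}$, $w_{2}=u_{2}$, after which the flux is continuous, $\partial_{\nu}w_{1}=\partial_{\nu}w_{2}$, and the whole $\lambda$-dependence migrates to a Dirichlet jump $w_{1}-w_{2}=df_{1}+id\mu u_{1}$ on $\mathcal{I}$. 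That jump lives in $H^{1/2}(\mathcal{I})$ and is controlled by $\|u\|_{H^{1}(\omega)}$, which \emph{is} what the dissipation gives. To make this work, the paper proves a variant of the transmission Carleman estimate with boundary terms measured in $(H^{1/2},L^{2})$ rather than $(H^{1},L^{2})$; this shift of Sobolev scale is the technical heart of Section~\ref{carleman} and is what your proposal is missing.

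\textbf{The choice of $\tau$.} You take $\tau\sim|\lambda|^{2}$ and then assert that this produces $\|U\|_{\mathcal{H}}\le Ce^{C|\lambda|}\|F\|_{\mathcal{H}}$. That arithmetic is off: with $\tau\sim|\lambda|^{2}$ the ratio $e^{\tau\max\varphi}/e^{\tau\min\varphi}$ is of size $e^{c|\lambda|^{2}}$, so you would only obtain $\|(i\lambda-\mathcal{A})^{-1}\|\le Ce^{C|\lambda|^{2}}$, hence decay like $(\ln t)^{-k}$ rather than $(\ln t)^{-2k}$. The paper avoids this by taking $\tau=|\mu|$ and building the $-\mu^{2}$ (resp.\ $\mu^{2}/(1+id\mu)$) into the \emph{principal symbol} of the conjugated operator, so that no extra factor of $|\lambda|$ is lost when absorbing the zeroth-order term. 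With $\tau=|\mu|$ the weight ratio is $e^{c|\mu|}$ and the claimed resolvent bound follows.
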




\medskip

This paper is organized as follows. In Section \ref{wellposed}, we give the proper functional setting for systems \rfb{wave1}-\rfb{wave3}, and prove that this system is well-posed. In Section \ref{carleman}, we establish some Carleman estimate which is correspond to the system \rfb{wave1}-\rfb{wave3}. Finally, in Section \ref{stab}, we study the stabilization for  \rfb{wave1}-\rfb{wave3} by resolvent method and give the explicit decay rate of the energy of the solutions of \rfb{wave1}-\rfb{wave3}.
\section{Well-posedness and strong stability}\label{wellposed}
We define the energy space by $\mathcal{H}=H_{0}^{1}(\Omega)\times L^{2}(\Omega)$ which is endowed with the usual inner product
$$
\left\langle(u_{1},v_{1});(u_{2},v_{2})\right\rangle=\int_{\Omega}\nabla u_{1}(x).\nabla \overline{u}_{2}(x)\,\dd x+\int_{\Omega}v_{1}(x)\overline{v}_{2}(x)\,\dd x.
$$
We next define the linear unbounded operator $\mathcal{A}:\mathcal{D}(\mathcal{A})\subset\mathcal{H}\longrightarrow\mathcal{H}$ by
$$
\mathcal{D}(\mathcal{A})=\{(u,v)\in\mathcal{H}: v\in H_{0}^{1}(\Omega),\; \Delta u+\mathrm{div}(a\nabla v)\in L^{2}(\Omega)\}
$$
and
$$
\mathcal{A}(u,v)^{t}=(v,\Delta u+\mathrm{div}(a\nabla v))^{t}
$$
Then, putting $v=\partial_{t} u$, we can write \eqref{wave1}-\eqref{wave3} into the following Cauchy problem
$$
\frac{d}{dt}(u(t),v(t))^{t}=\mathcal{A}(u(t),v(t))^{t},\;(u(0),v(0))=(u^{0}(x),u^{1}(x)).
$$
\begin{thm}
The operator $\mathcal{A}$ generates a $C_{0}$-semigroup of contractions on the energy space $\mathcal{H}$.
\end{thm}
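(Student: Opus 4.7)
The plan is to invoke the Lumer--Phillips theorem. Concretely, I would show that $\mathcal{A}$ is densely defined, dissipative, and that $I-\mathcal{A}$ is surjective; contractivity of the semigroup then follows automatically. Density of $\mathcal{D}(\mathcal{A})$ in $\mathcal{H}$ is immediate since $\mathcal{D}(\mathcal{A})$ contains $(C_c^{\infty}(\Omega))^{2}$.

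For dissipativity, I would take $U=(u,v)\in\mathcal{D}(\mathcal{A})$ and compute
$$
\langle\mathcal{A}U,U\rangle_{\mathcal{H}}=\int_{\Omega}\nabla v\cdot\nabla\bar u\,\dd x+\int_{\Omega}\bigl(\Delta u+\mathrm{div}(a\nabla v)\bigr)\bar v\,\dd x.
$$
Since $v\in H^{1}_{0}(\Omega)$, Green's formula (justified by the $L^{2}$ hypothesis on $\Delta u+\mathrm{div}(a\nabla v)$, so that $\nabla u$ and $a\nabla v$ have a joint normal trace in $H^{-1/2}(\Gamma)$ acting on the trace of $v$, which is zero) yields
$$
\int_{\Omega}\bigl(\Delta u+\mathrm{div}(a\nabla v)\bigr)\bar v\,\dd x=-\int_{\Omega}\nabla u\cdot\nabla\bar v\,\dd x-\int_{\Omega}a|\nabla v|^{2}\,\dd x.
$$
Taking real parts, the two mixed terms cancel and I obtain $\mathrm{Re}\,\langle\mathcal{A}U,U\rangle_{\mathcal{H}}=-\int_{\Omega}a|\nabla v|^{2}\,\dd x\leq 0$.

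For maximality, given $(f,g)\in\mathcal{H}$ I would seek $(u,v)\in\mathcal{D}(\mathcal{A})$ solving $(I-\mathcal{A})(u,v)=(f,g)$. The first equation forces $v=u-f\in H^{1}_{0}(\Omega)$, and substituting into the second reduces the problem to finding $u\in H^{1}_{0}(\Omega)$ satisfying
$$
u-\Delta u-\mathrm{div}(a\nabla u)=f+g-\mathrm{div}(a\nabla f)
$$
in the distributional sense. I would attack this via Lax--Milgram applied to the sesquilinear form
$$
b(u,\varphi)=\int_{\Omega}u\bar\varphi\,\dd x+\int_{\Omega}\nabla u\cdot\nabla\bar\varphi\,\dd x+\int_{\Omega}a\nabla u\cdot\nabla\bar\varphi\,\dd x,
$$
on $H^{1}_{0}(\Omega)$, which is continuous and coercive (with coercivity constant $1$ since $a\geq 0$), against the antilinear functional $\varphi\mapsto\int_{\Omega}(f+g)\bar\varphi\,\dd x+\int_{\Omega}a\nabla f\cdot\nabla\bar\varphi\,\dd x$, which is bounded on $H^{1}_{0}(\Omega)$ because $f\in H^{1}_{0}(\Omega)$, $g\in L^{2}(\Omega)$ and $a\in L^{\infty}(\Omega)$. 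Reading the resulting variational identity for arbitrary $\varphi\in C_{c}^{\infty}(\Omega)$ gives $\Delta u+\mathrm{div}(a\nabla v)=v-g\in L^{2}(\Omega)$, so $(u,v)\in\mathcal{D}(\mathcal{A})$.

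I do not anticipate a real obstacle: the only subtle point is to justify the integration by parts in the dissipativity estimate when $u$ lacks $H^{2}$ regularity (since $a$ is only $L^{\infty}$). I would handle this by reading the integration by parts directly at the level of the definition of $\mathcal{D}(\mathcal{A})$, pairing $\Delta u+\mathrm{div}(a\nabla v)\in L^{2}(\Omega)$ with $v\in H^{1}_{0}(\Omega)$ through the $H^{-1}$--$H^{1}_{0}$ duality, which produces precisely the two terms above with no boundary contribution. Combining dissipativity with surjectivity of $I-\mathcal{A}$ and density of $\mathcal{D}(\mathcal{A})$, Lumer--Phillips concludes.
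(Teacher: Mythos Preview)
Your overall strategy---Lumer--Phillips via dissipativity and surjectivity of $I-\mathcal{A}$---is the same as the paper's, and your dissipativity and maximality arguments are correct and in fact more carefully justified than the paper's (which simply asserts the integration-by-parts identity). The only substantive difference is that the paper checks surjectivity of $-\mathcal{A}$ (i.e.\ $0\in\rho(\mathcal{A})$) and then perturbs, whereas you check $I-\mathcal{A}$ directly; your route is slightly more direct.

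There is, however, one genuine slip in your density argument. You claim $(C_c^\infty(\Omega))^2\subset\mathcal{D}(\mathcal{A})$, but this is false: for $v\in C_c^\infty(\Omega)$ with $\partial_\nu v\not\equiv 0$ on the interface $\mathcal{I}=\partial\omega$, the distribution $\mathrm{div}(a\nabla v)=d\,\mathbb{1}_\omega\Delta v - d\,(\partial_\nu v)\,\delta_{\mathcal{I}}$ carries a surface layer and is not in $L^2(\Omega)$. The fix is easy: either restrict the second component to $C_c^\infty(\Omega\setminus\mathcal{I})$, which is still dense in $L^2(\Omega)$ since $\mathcal{I}$ has measure zero, and then $C_c^\infty(\Omega)\times C_c^\infty(\Omega\setminus\mathcal{I})\subset\mathcal{D}(\mathcal{A})$ is dense in $\mathcal{H}$; or, as the paper does, invoke the standard fact (e.g.\ \cite[Theorem 1.4.6]{Pazy}) that a dissipative operator on a reflexive space satisfying the range condition is automatically densely defined, so that density need not be checked separately.
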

\begin{proof}
Firstly, it is easy to see that for all $(u,v)\in\mathcal{D}(\mathcal{A})$, we have
$$
\mathrm{Re}\left\langle\mathcal{A}(u,v);(u,v)\right\rangle=-\int_{\Omega}a|\nabla v(x)|^{2}\,\dd x,
$$
which show that the operator $\mathcal{A}$ is dissipative.

Next, for any given $(f,g)\in\mathcal{H}$, we solve the equation $\mathcal{A}(u,v)=(f,g)$, which is recast on the following way
\begin{equation}\label{WPwave}
\left\{\begin{array}{l}
v=f,
\\
\Delta u+\mathrm{div}(a\nabla f)=g.
\end{array}\right.
\end{equation}
It is well known that by Lax-Milgram's theorem the system \eqref{WPwave} admits a unique solution $u\in H_{0}^{1}(\Omega)$. Moreover by multiplying the second line of \eqref{WPwave} by $\overline{u}$ and integrating over $\Omega$ and using Poincar\'e inequality and Cauchy-Schwarz inequality we find that there exists a constant $C>0$ such that
$$
\int_{\Omega}|\nabla u(x)|^{2}\,\dd x\leq C\left(\int_{\Omega}|\nabla f(x)|^{2}\,\dd x+\int_{\Omega}|g(x)|^{2}\,\dd x\right).
$$
It follows that for all $(u,v)\in\mathcal{D}(\mathcal{A})$ we have
$$
\|(u,v)\|_{\mathcal{H}}\leq C\|(f,g)\|_{\mathcal{H}}.
$$
This imply that $0\in\rho(\mathcal{A})$ and by contraction principle, we easily get $R(\lambda\mathrm{I}-\mathcal{A})=\mathcal{H}$ for sufficient small $\lambda>0$. The density of the domain of $\mathcal{A}$ follows from \cite[Theorem 1.4.6]{Pazy}. Then thanks to Lumer-Phillips Theorem (see \cite[Theorem 1.4.3]{Pazy}), the operator $\mathcal{A}$ generates a $C_{0}$-semigroup of contractions on the Hilbert $\mathcal{H}$. 
\end{proof}
\begin{thm}
The semigroup $e^{t\mathcal{A}}$ is strongly stable in the energy space $\mathcal{H}$, i.e,
$$
\lim_{t\to+\infty}\|e^{t\mathcal{A}}(u_{0},v_{0})^{t}\|_{\mathcal{H}}=0,\;\forall\,(u_{0},v_{0})\in\mathcal{H}.
$$ 
\end{thm}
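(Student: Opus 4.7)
The plan is to invoke the Arendt--Batty stability theorem for contraction semigroups: for a dissipative generator $\mathcal{A}$ of a contraction $C_{0}$-semigroup on a Hilbert space, strong stability follows from $\sigma_{p}(\mathcal{A}^{*})\cap i\mathbb{R}=\emptyset$ together with $\sigma(\mathcal{A})\cap i\mathbb{R}$ being countable. I will in fact establish the cleaner statement $i\mathbb{R}\subset\rho(\mathcal{A})$. The adjoint is easily computed to be $\mathcal{A}^{*}(u,v)=(-v,\,-\Delta u+\mathrm{div}(a\nabla v))$, which is of exactly the same structure as $\mathcal{A}$ and is handled by identical arguments, so I only discuss $\mathcal{A}$ below.

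\emph{Step 1 (absence of imaginary eigenvalues).} Assume $\mathcal{A}(u,v)=i\beta(u,v)$ with $(u,v)\in\mathcal{D}(\mathcal{A})\setminus\{0\}$ and $\beta\in\mathbb{R}$. Taking the real part of the inner product with $(u,v)$ and using the dissipation identity of the preceding theorem yields $d\int_{\omega}|\nabla v|^{2}\,\dd x=0$. For $\beta=0$, this gives $v=0$ and $\Delta u=0$ with $u\in H_{0}^{1}(\Omega)$, hence $u=0$. For $\beta\neq 0$, the first component of the eigenvalue equation gives $v=i\beta u$, so $\nabla u\equiv 0$ on $\omega$; hence $u$ is locally constant on $\omega$, $\Delta u=0$ there, and the bulk equation $(1+i\beta d)\Delta u=-\beta^{2}u$ on $\omega$ forces $u\equiv 0$ on $\omega$. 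The natural transmission conditions at $\mathcal{I}=\partial\omega$---continuity of $u$ (from $u\in H_{0}^{1}(\Omega)$) and continuity of the conormal flux $(1+i\beta a)\partial_{n}u$---then give vanishing Cauchy data $u|_{\mathcal{I}}=0$ and $\partial_{n}u|_{\mathcal{I}}=0$ for the Helmholtz equation $-\Delta u=\beta^{2}u$ on $\Omega\setminus\overline{\omega}$. Elliptic unique continuation yields $u\equiv 0$ throughout $\Omega$.

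\emph{Step 2 (surjectivity of $i\beta-\mathcal{A}$).} The case $\beta=0$ is contained in the previous theorem. For $\beta\neq 0$ and $(f,g)\in\mathcal{H}$, eliminating $v=i\beta u-f$ from the resolvent equation $(i\beta-\mathcal{A})(u,v)=(f,g)$ reduces the problem to finding $u\in H_{0}^{1}(\Omega)$ satisfying
$$
B(u,\varphi):=\int_{\Omega}(1+i\beta a)\nabla u\cdot\nabla\overline{\varphi}\,\dd x-\beta^{2}\int_{\Omega}u\overline{\varphi}\,\dd x=L(\varphi),\qquad\forall\,\varphi\in H_{0}^{1}(\Omega),
$$
for a suitable antilinear functional $L\in H^{-1}(\Omega)$ built from $(f,g)$. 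The principal part $B_{0}(u,\varphi)=\int_{\Omega}(1+i\beta a)\nabla u\cdot\nabla\overline{\varphi}\,\dd x$ is continuous on $H_{0}^{1}(\Omega)$ and satisfies $\mathrm{Re}\,B_{0}(u,u)=\|\nabla u\|_{L^{2}}^{2}$, hence is coercive, while the lower-order term $-\beta^{2}\int_{\Omega}u\overline{\varphi}\,\dd x$ is a compact perturbation by the compact embedding $H_{0}^{1}(\Omega)\hookrightarrow L^{2}(\Omega)$. The Fredholm alternative then reduces solvability to triviality of the kernel; but any $u\in H_{0}^{1}(\Omega)$ with $B(u,\cdot)\equiv 0$ gives, setting $v:=i\beta u$, an element $(u,v)\in\mathcal{D}(\mathcal{A})$ with $\mathcal{A}(u,v)=i\beta(u,v)$, which must vanish by Step~1. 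Hence $i\beta\in\rho(\mathcal{A})$, and Arendt--Batty delivers strong stability.

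The main obstacle is the unique continuation step in Step~1: one must ensure that the vanishing of $u$ on $\omega$ genuinely produces vanishing Cauchy data on $\mathcal{I}$ for the Helmholtz operator on $\Omega\setminus\overline{\omega}$, and that the geometric setup (with $\Omega\setminus\overline{\omega}$ connected and sharing the boundary piece $\mathcal{I}$ on which the Cauchy data vanish) is compatible with the classical elliptic uniqueness theorem. Once this geometric input is in place, everything else is soft functional analysis (Lax--Milgram, Fredholm alternative, Arendt--Batty).
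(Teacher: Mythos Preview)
Your proof is correct and follows the same overall strategy as the paper: establish $i\mathbb{R}\subset\rho(\mathcal{A})$ by combining the dissipation identity with a unique continuation argument for the kernel, and then a Fredholm alternative for surjectivity. The surjectivity step is virtually identical (the paper writes the principal operator $A=-\mathrm{div}((1+i\mu a)\nabla\,\cdot\,)$ explicitly and applies Fredholm to $I-\mu^{2}A^{-1}$, which is your sesquilinear-form argument in operator language).

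The only substantive tactical difference is in how the unique continuation is set up. You argue directly that $u\equiv 0$ on $\omega$ and then read off vanishing Cauchy data on $\mathcal{I}$ from the transmission conditions encoded in $\mathcal{D}(\mathcal{A})$, applying Holmgren/elliptic uniqueness for $-\Delta u=\beta^{2}u$ on $\Omega\setminus\overline{\omega}$. The paper instead passes to the derivatives $w_{j}=\partial_{x_{j}}u$, which vanish on $\omega$ and satisfy the Helmholtz equation on all of $\Omega$, and applies unique continuation on the whole domain; this sidesteps any connectedness assumption on $\Omega\setminus\overline{\omega}$ at the price of implicitly using that $w_{j}$ is a genuine weak Helmholtz solution across $\mathcal{I}$. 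Your version makes the interface structure explicit (which is thematically consistent with the rest of the paper), while the paper's derivative trick is slightly slicker but hides the same transmission information. Either route is fine here.
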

\begin{proof}
To show that the semigroup $(e^{t\mathcal{A}})_{t\geq 0}$ is strongly stable we only have to prove that the intersection of $\sigma(\mathcal{A})$ with $i\mathbb{R}$ is an empty set. Since the resolvent of the operator $\mathcal{A}$ is not compact (see \cite{liu-liu, liu-rao2}) but $0\in\rho(\mathcal{A})$ we only need to prove that $(i\mu I-\mathcal{A})$ is a one-to-one correspondence in the energy space $\mathcal{H}$ for all $\mu\in\mathbb{R}^{*}$. The proof will be done in two steps: in the first step we will prove the injective property of $(i\mu I-\mathcal{A})$ and in the second step we will prove the surjective property of the same operator.

i) Let $(u,v)\in\mathcal{D}(\mathcal{A})$ such that 
\begin{equation}\label{Iwave}
\mathcal{A}(u,v)^{t}=i\mu(u,v)^{t}.
\end{equation}
Then taking the real part of the scalar product of \eqref{Iwave} with $(u,v)$ we get
$$
\mathrm{Re}(i\mu\|(u,v)\|_{\mathcal{H}}^{2})=\mathrm{Re}\left\langle\mathcal{A}(u,v),(u,v)\right\rangle=-d\int_{\omega}|\nabla v|^{2}\dd x=0.
$$
which implies that
\begin{equation}\label{Dwave}
\nabla v=0 \qquad \text{ in }\,\omega.
\end{equation}
Inserting \eqref{Dwave} into \eqref{Iwave}, we obtain
\begin{equation}\label{waveI1}
\left\{\begin{array}{ll}
\mu^{2}u+\Delta u=0&\text{in }\Omega\backslash\omega,
\\
\nabla u=0&\text{in }\omega
\\
u=0&\text{on }\Gamma,
\end{array}\right.
\end{equation}

We denote by $w_{j}=\partial_{x_{j}}u$ and we derive the first and the second equations of \eqref{waveI1}, one gets
\begin{equation*}
\left\{\begin{array}{ll}
\mu^{2}w_{j}+\Delta w_{j}=0&\text{in }\Omega,
\\
w_{j}=0&\text{in }\omega.
\end{array}\right.
\end{equation*}
Hence, from the unique continuation theorem we deduce that $w_{j}=0$ in $\Omega$ and therefore $u$ is constant in $\Omega$ and since $u_{|\Gamma}=0$ we follow that $u\equiv 0$. We have thus proved that $\mathrm{Ker}(i\mu I-\mathcal{A})=0$.
	
ii) Now given $(f,g)\in\mathcal{H}$, we solve the equation 
$$
(i\mu I-\mathcal{A})(u,v)=(f,g)
$$
Or equivalently,
\begin{equation}\label{Swave}
\left\{\begin{array}{l}
v=i\mu u-f
\\
\mu^{2}u+\Delta u+i\mu\,\mathrm{div}(a\nabla u)=\mathrm{div}(a\nabla f)-i\mu f-g.
\end{array}\right.
\end{equation}
Let's define the operator
$$
Au=-(\Delta u+i\mu\,\mathrm{div}(a\nabla u)),\quad \forall\, u\in H_{0}^{1}(\Omega).
$$
It is easy to show that $A$ is an isomorphism from $H_{0}^{1}(\Omega)$ onto $H^{-1}(\Omega)$. Then the second line of \eqref{Swave} can be written as follow
\begin{equation}\label{Eqwave}
u-\mu^{2}A^{-1}u=A^{-1}\left[g+i\mu f-\mathrm{div}(a\nabla f)\right].
\end{equation}
If $u\in\mathrm{Ker}(I-\mu^{2}A^{-1})$, then $\mu^{2}u-Au=0$. It follows that
\begin{equation}\label{Awave}
\mu^{2}u+\Delta u+i\mu\mathrm{div}(a\nabla u)=0.
\end{equation}
Multiplying \eqref{Awave} by $\overline{u}$ and integrating over $\Omega$, then by Green's formula we obtain 
$$
\mu^{2}\int_{\Omega}|u(x)|^{2}\,\dd x-\int_{\Omega}|\nabla u(x)|^{2}\,\dd x-id\mu\int_{\omega}|\nabla u(x)|^{2}\,\dd x=0.
$$
This shows that 
$$
d\int_{\omega}|\nabla u(x)|^{2}\,\dd x=0,
$$
which imply that $\nabla u=0$ in $\omega$.
\\
Inserting this last equation into~\eqref{Awave} we get
$$
\mu^{2}u+\Delta u=0,\qquad \text{in }\Omega.
$$
Once again, using the unique continuation theorem as in the first step where we recall that $u_{|\Gamma}=0$, we get $u=0$ in $\Omega$. This imply that $\mathrm{Ker}(I-\mu^{2}A^{-1})=\{0\}$. On the other hand thanks to the compact embeddings $H_{0}^{1}(\Omega)\hookrightarrow L^{2}(\Omega)$ and $L^{2}(\Omega)\hookrightarrow H^{-1}(\Omega)$ we see that $A^{-1}$ is a compact operator in $H_{0}^{1}(\Omega)$. Now thanks to Fredholm's alternative, the operator $(I-\mu^{2}A^{-1})$ is bijective in $H_{0}^{1}(\Omega)$, hence the equation \eqref{Eqwave} have a unique solution in $H_{0}^{1}(\Omega)$, which yields that the operator $(i\mu I-\mathcal{A})$ is surjective in the energy space $\mathcal{H}$. The proof is thus complete.
\end{proof}
\section{Carleman estimate}\label{carleman}
For any $s\in\R$ we define the Sobolev space with a parameter $\tau$, $H_{\tau}^{s}$ by
$$
u(x,\tau)\in H^{s}_{\tau}\,\Longleftrightarrow\,\left\langle\xi,\tau\right\rangle^{s}\hat{u}(\xi,\tau)\in L^{2};\qquad\langle\xi,\tau\rangle^{2}=|\xi|^{2}+\tau^{2},
$$
where $\hat{u}$ denote the partial Fourier transform with respect to $x$.
The class of symbols of order $m$ defined by
$$
\mathcal{S}_{\tau}^{m}=\left\{a(x,\xi,\tau)\in\mathcal{C}^{\infty};
\;|\partial_{x}^{\alpha}\partial_{\xi}^{\beta}a(x,\xi,\tau)|\leq C_{\alpha,\beta}\langle\xi,\tau\rangle ^{m-|\beta|} \right\}
$$
and the class of tangential symbols of order $m$ by
$$
\mathcal{TS}_{\tau}^{m}=\left\{a(x,\xi',\tau)\in\mathcal{C}^{\infty}
;\;|\partial_{x}^{\alpha}\partial_{\xi'}^{\beta}a(x,\xi',\tau)|\leq C_{\alpha,\beta}\langle\xi',\tau\rangle^{m-|\beta|}   \right\}
$$
We denote by $\mathcal{O}^{m}$ (resp. $\mathcal{TO}^{m}$) the set of pseudo-differential operators $A=\op(a)$, $a\in\mathcal{S}^{m}$ (resp. $a\in\mathcal{TS}^{m}$). We shall use the symbol $\Lambda=\langle\xi',\tau\rangle=(|\xi'|^{2}+\tau^{2})^{\frac{1}{2}}$. 

Consider a bounded smooth open set $\mathcal{U}$ of $\R^{n}$ with boundary $\partial\mathcal{U}=\gamma$. We set $\mathcal{U}_{1}$ and $\mathcal{U}_{2}$ two smooth open subsets of $\mathcal{U}$ with boundaries $\partial\mathcal{U}_{1}=\gamma_{0}$ and $\partial\mathcal{U}_{2}=\gamma\cup\gamma$ such that $\overline{\gamma}_{0}\cup\overline{\gamma}=\emptyset$. We denote by $\nu(x)$ the unit outer normal to $\mathcal{U}_{2}$ if $x\in\gamma_{0}\cup\gamma$.\label{carl53}

For $\tau$ a large parameter and $\varphi_{1}$ and $\varphi_{2}$ two weight functions of class $\mathcal{C}^{\infty}$ in 
$\overline{\mathcal{U}}_{1}$ and $\overline{\mathcal{U}}_{2}$ respectively such that 
$\varphi_{1|\gamma_{0}}=\varphi_{2|\gamma_{0}}$ we denote by $\varphi(x)=\mathrm{diag}(\varphi_{1}(x),\varphi_{2}(x))$ 
and   let $\alpha$ be a non null complex number.
 We set the differential operator
$$
P=\mathrm{diag}(P_{1},P_{2})=\mathrm{diag}\left(-\Delta+\frac{\tau^{2}}{1+\alpha\tau},-\Delta-\tau^{2}\right),
$$
and its conjugate operator
$$
P(x,D,\tau)=\e^{\tau\varphi}P\e^{-\tau\varphi}=\mathrm{diag}(P_{1}(x,D,\tau),P_{2}(x,D,\tau)),
$$
with principal symbol $p(x,\xi,\tau)$ given by
\begin{align*}
p(x,\xi,\tau)&=\mathrm{diag}(p_{1}(x,\xi,\tau),p_{2}(x,\xi,\tau))
\\
&=\mathrm{diag}(|\xi|^{2}
+2i\tau \xi\nabla\varphi_{1}-\tau^{2}|\nabla\varphi_{1}|^{2},|\xi|^{2}+2i\tau\xi\nabla\varphi_{2}-\tau^{2}|\nabla\varphi_{2}|^{2}-\tau^{2}).
\end{align*}
In a small neighborhood $W$ of a point $x_{0}$ of $\gamma_{0}$, we place ourselves in normal geodesic 
coordinates and we denote by $x_{n}$ the variable that is normal to the interface $\gamma_{0}$ and by $x'$ 
the reminding spacial variables, i.e., $x=(x',x_{n})$. The interface $\gamma_{0}$ is now given by 
$\gamma_{0}=\{x\,;\;x_{n}=0\}$ where in particular we can assume that $x_{0}=(0,0)$ and that $W$ is symmetric with 
respect to $x_{n}\longmapsto-x_{n}$. We denote by
$$
W_{1}=\{x\in\R^{n},\;x_{n}>0\}\cap W,\qquad \text{and}\qquad W_{2}=\{x\in\R^{n},\;x_{n}<0\}\cap W.
$$

Next we will proceed like Bellassoued in \cite{bellassoued} and we will reduce the problem of the transmission in only one side. The operator $P_{1}$ on $W_{1}$ is written in the form
$$
P_{1}(x,D)=D_{x_{n}}^{2}+R(+x_{n},x',D_{x'})+\frac{\tau^{2}}{1+\alpha\tau}.
$$
and the operator $P_{2}$ on $W_{2}$ can be identified locally to an operator in $W_{1}$ given by
$$
P_{2}(x,D)=D_{x_{n}}^{2}+R(-x_{n},x',D_{x'})-\tau^{2}
$$
We denote the tangential operator, with the $\mathcal{C}^{\infty}$ coefficients defined in $W_{1}$ by
$$
R(x,D_{x'})=\mathrm{diag}(R(+x_{n},x',D_{x'}),R(-x_{n},x',D_{x'}))=\mathrm{diag}(R_{1}(x,D_{x'}),R_{2}(x,D_{x'}))
$$
with principal symbol $r(x,\xi')=\mathrm{diag}(r_{1}(x,\xi'),r_{2}(x,\xi'))$ where the quadratic form $r_{k}(x,\xi')$, $k=1,2$ satisfies
$$
\exists\,C>0,\quad\forall\,(x,\xi')\in W_{1}\times\R^{n-1},\quad r_{k}(x,\xi')\geq C|\xi'|^{2},\qquad k=1,2.
$$
We assume that $\varphi$ satisfies
\begin{eqnarray}
|\nabla\varphi_{k}(x)|> 0,\;\forall\,x\in\overline{W}_{1},\quad k=1,2,\label{carl2}
\\
\partial_{x_{n}}\varphi_{1}(x',0)<0\quad\text{and}\quad\partial_{x_{n}}\varphi_{2}(x',0)>0
\\
\left(\partial_{x_{n}}\varphi_{1}(x',0)\right)^{2}-\left(\partial_{x_{n}}\varphi_{2}(x',0)\right)^{2}>1,
\end{eqnarray}
The principal symbol $p(x,\xi,\tau)$ of $P(x,D,\tau)$ is now given by
$$
p(x,\xi,\tau)=\mathrm{diag}(p_{1}(x,\xi,\tau),p_{2}(x,\xi,\tau))=\left(\xi+i\tau(\partial_{x_{n}}\varphi)\right)^{2}
+r(x,\xi'+i\tau(\partial_{x'}\varphi))-\mathrm{diag}(0,\tau^{2})\,\in\mathcal{S}_{\tau}^{2},
$$
where we assume that it satisfies to following the sub-ellipticity condition
\begin{equation}\label{carl3}
\exists\,c>0,\;\forall\,(x,\xi)\in\overline{W}_{1}\times\R^{n},\;p_{k}(x,\xi,\tau)=0\,\Longrightarrow\,\left\{\mathrm{Re}(p_{k}),\mathrm{Im}(p_{k})\right\}(x,\xi,\tau)\geq c\langle \xi,\tau\rangle^3.
\end{equation}
We defined on the boundary $\{x_{n}=0\}\cap W$ the operators
\begin{equation*}
\left\{\begin{array}{ll}
\op(b_{1})w=w_{1}-w_{2}&\text{on }\{x_{n}=0\}\cap W
\\
\op(b_{2})w=\left(D_{x_{n}}+i\tau\partial_{x_{n}}\varphi_{1}\right)w_{1}+\left(D_{x_{n}}+i\tau\partial_{x_{n}}\varphi_{2}\right)w_{2}&\text{on }\{x_{n}=0\}\cap W.
\end{array}\right.
\end{equation*}
We denote by $\|v\|=\|v\|_{L^{2}(W_{2})}$ with the correspondent scalar product denoted by $(v_{1},v_{2})$. For $s\in\R$ we denote by $\|v\|_{s}^{2}=\|\op(\Lambda^{s})v\|^{2}$ and $|v|_{s}^{2}=\|v_{|x_{n}=0}\|_{s}^{2}$ such that when $s=0$ the norm $|v|_{0}$ with the scalar product $(v_{1},v_{2})_{0}=(v_{1|x_{n}=0},v_{2|x_{n}=0})$ will be denoted simply $|v|$. Finally, we denote by $|v|_{1,0,\tau}^{2}=|v|_{1}^{2}+|D_{n}v|^{2}$.

Before proving the Carleman estimate we recall the following theorem given by \cite[Theorem 2.3]{rousseau-robbiano}.
\begin{prop}
Let $\varphi$ satisfies \eqref{carl2}-\eqref{carl3}. Then there exist $C>0$ and $\tau_{0}>0$ such that for any $\tau\geq\tau_{0}$ we have the following estimate
\begin{equation}\label{carl6}
\tau^{3}\|w\|^{2}+\tau\|\nabla w\|^{2}\leq C\left(\|P(x,D,\tau)w\|^{2}+\tau|w|_{1,0,\tau}^{2}\right)
\end{equation}
and
\begin{equation}\label{carl7}
\tau^{3}\|w\|^{2}+\tau\|\nabla w\|^{2}+\tau|w|_{1,0,\tau}^{2}\leq C\left(\|P(x,D,\tau)w\|^{2}+\tau|\op(b_{1})w|_{1}^{2}+\tau|\op(b_{2})w|^{2}\right)
\end{equation}
for any $w\in\mathcal{C}_{0}^{\infty}(K)$ where $K\subset\overline{W}_{1}$ is a compact subset.
\end{prop}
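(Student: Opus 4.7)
The plan is to establish \eqref{carl6} first as a block-diagonal Carleman estimate for $P=\mathrm{diag}(P_1,P_2)$ on the half space $W_1$ with traces kept free at $\{x_n=0\}$, and then to upgrade it to \eqref{carl7} by absorbing the trace $\tau|w|_{1,0,\tau}^2$ through the transmission operators $\op(b_1),\op(b_2)$. Since only the boundary terms couple the two components, the interior analysis decouples into two scalar Carleman estimates and the hypotheses on $\partial_{x_n}\varphi_k(x',0)$ enter only in the second step.

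For the scalar estimate I would follow the H\"ormander--Calder\'on scheme. Splitting each conjugated operator as $P_k(x,D,\tau)=Q_{2,k}+iQ_{1,k}$ with formally self-adjoint $Q_{j,k}\in\mathcal{O}^2$ having principal symbols $\mathrm{Re}\,p_k$ and $\mathrm{Im}\,p_k$ (for $P_1$ the extra piece $\tau^2/(1+\alpha\tau)=\tau/\alpha+O(1)$ is of order $\tau$ and therefore absorbable at large $\tau$), integration by parts in $x_n$ yields
$$\|P_kw_k\|^2=\|Q_{2,k}w_k\|^2+\|Q_{1,k}w_k\|^2+\bigl(i[Q_{2,k},Q_{1,k}]w_k,w_k\bigr)+B_k(w_k),$$
where $B_k$ is a Hermitian form in the traces $u_k=w_{k|x_n=0}$, $v_k=D_{x_n}w_{k|x_n=0}$ satisfying $|B_k(w_k)|\lesssim\tau|w_k|_{1,0,\tau}^2$. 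The sub-ellipticity \eqref{carl3} combined with the sharp G\aa rding inequality gives $\bigl(i[Q_{2,k},Q_{1,k}]w_k,w_k\bigr)\gtrsim\tau^3\|w_k\|^2+\tau\|\nabla w_k\|^2$ modulo terms absorbable for $\tau$ large; summing over $k=1,2$ produces \eqref{carl6}.

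For \eqref{carl7} the point is to dominate $\tau|w|_{1,0,\tau}^2$ by $\|Pw\|^2$ together with the two boundary data. The boundary quadratic form recorded in the previous step can be written, after a tangential pseudo-differential diagonalization in $\Lambda=\langle\xi',\tau\rangle$, as a Hermitian form $\mathcal{Q}(u_1,v_1,u_2,v_2)$ whose coefficients depend on $\partial_{x_n}\varphi_k(x',0)$. The idea is to parametrize the kernel of the map $(u_1,v_1,u_2,v_2)\mapsto(\op(b_1)w,\op(b_2)w)$ using the relations $\op(b_1)w=u_1-u_2$ and $\op(b_2)w=v_1+v_2+i\tau(\partial_{x_n}\varphi_1\,u_1+\partial_{x_n}\varphi_2\,u_2)$, and to verify that the restriction of $\mathcal{Q}$ to this kernel is uniformly positive as a symbol. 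A direct computation reduces this positivity to the combined conditions $\partial_{x_n}\varphi_1<0<\partial_{x_n}\varphi_2$ and $(\partial_{x_n}\varphi_1)^2-(\partial_{x_n}\varphi_2)^2>1$, so that a microlocal G\aa rding inequality at the boundary delivers
$$\tau|w|_{1,0,\tau}^2\lesssim\tau|\op(b_1)w|_1^2+\tau|\op(b_2)w|^2+C\|Pw\|^2+\varepsilon\bigl(\tau^3\|w\|^2+\tau\|\nabla w\|^2\bigr),$$
and combining with \eqref{carl6} for $\varepsilon$ small and $\tau$ large yields \eqref{carl7}.

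The main obstacle is the boundary analysis in the last paragraph: because the tangential symbols $r_k(x',0,\xi')-\tau^2|\partial_{x'}\varphi_k|^2$ need not have a fixed sign, one must partition $(\xi',\tau)$ into an elliptic zone, where a parametrix construction reduces matters to a Dirichlet-type trace estimate, and a hyperbolic/glancing zone, where the boundary G\aa rding argument has to be carried out carefully. Tracking the strict inequality $(\partial_{x_n}\varphi_1)^2-(\partial_{x_n}\varphi_2)^2>1$ through all these microlocal reductions is the delicate technical point, and it is precisely the gap this inequality produces in the boundary symbol that converts the interior Carleman bound into a genuine trace estimate.
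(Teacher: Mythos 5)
The first thing to note is that the paper does not prove this proposition at all: it is recalled verbatim as \cite[Theorem 2.3]{rousseau-robbiano} (``we recall the following theorem given by...''), so the relevant comparison is with the proof in that reference, which is a long microlocal argument, not with anything in the present paper. Your two-stage architecture --- an interior H\"ormander-type commutator estimate for \eqref{carl6}, then absorption of the trace term $\tau|w|_{1,0,\tau}^{2}$ through $\op(b_{1})$, $\op(b_{2})$ for \eqref{carl7} --- does match the broad shape of such interface Carleman estimates, but it contains one repairable misstatement and one decisive gap.

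The misstatement: sub-ellipticity \eqref{carl3} plus sharp G\aa rding does \emph{not} give $\bigl(i[Q_{2,k},Q_{1,k}]w_{k},w_{k}\bigr)\gtrsim\tau^{3}\|w_{k}\|^{2}+\tau\|\nabla w_{k}\|^{2}$, because the bracket $\{\mathrm{Re}\,p_{k},\mathrm{Im}\,p_{k}\}$ is only required to be positive on the characteristic set $p_{k}=0$ and may be negative elsewhere; G\aa rding can only be applied to a combination such as $\mu\langle\xi,\tau\rangle^{-1}\bigl(Q_{2,k}^{2}+Q_{1,k}^{2}\bigr)+i[Q_{2,k},Q_{1,k}]$, and, since $w$ does not vanish on $\{x_{n}=0\}$, even this must be run in the tangential calculus rather than the full one. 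That is standard and fixable. The decisive gap is your step (iii): the claim that the boundary Hermitian form $\mathcal{Q}$ is \emph{uniformly} positive, as a symbol, on the kernel of $(u_{1},v_{1},u_{2},v_{2})\mapsto(\op(b_{1})w,\op(b_{2})w)$ is simply not true on all of tangential phase space, whatever the signs of $\partial_{x_{n}}\varphi_{k}$. In the microlocal zone where, for some $k$, the polynomial $\xi_{n}\mapsto p_{k}(x',0,\xi',\xi_{n},\tau)$ has one root in each half-plane (this always happens, e.g.\ for $|\xi'|$ large compared to $\tau$), the form is indefinite on that kernel, and the missing trace cannot be produced by any boundary positivity argument: it has to be extracted from the interior equation, by factoring $P_{k}$ into first-order factors and using the factor with favorable sign of the imaginary part of its root (a Calder\'on-projector/parametrix estimate). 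The conditions $\partial_{x_{n}}\varphi_{1}<0<\partial_{x_{n}}\varphi_{2}$ and $(\partial_{x_{n}}\varphi_{1})^{2}-(\partial_{x_{n}}\varphi_{2})^{2}>1$ (the $1$ compensating the $-\tau^{2}$ in $p_{2}$) are used in \cite{rousseau-robbiano} exactly to guarantee that the zone where one component is ``bad'' is contained in the zone where the other is ``good,'' so that the transmission conditions can transfer the controlled traces across the interface. Your closing paragraph concedes precisely this zone decomposition as ``the main obstacle'' and leaves it unresolved; but that decomposition \emph{is} the proof of \eqref{carl7}, so what you have is a plausible plan whose hardest and essential step is missing.
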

Now we are ready to state our local Carleman estimate whose main ingredients are estimates \eqref{carl6} and \eqref{carl7}. In fact, the Carleman estimate established here is an estimate analogous the previous one but with another scale of Sobolev spaces.
\begin{thm}
Let $\varphi$ satisfies \eqref{carl2}-\eqref{carl3}. There exist $C>0$ and $\tau_{0}>0$ such that for any $\tau\geq\tau_{0}$ we have the following estimate
\begin{equation}\label{carl51}
\tau^{3}\|w\|^{2}+\tau\|\nabla w\|^{2}+\tau^{2}|w|_{\frac{1}{2}}^{2}+\tau^{2}|D_{x_{n}}w|_{-\frac{1}{2}}^{2}\leq C\left(\|P(x,D,\tau)w\|^{2}
+\tau^2 |\op(b_{1})w|_{\frac{1}{2}}^{2}+\tau|\op(b_{2})w|^{2}\right)
\end{equation}
for any $w\in\mathcal{C}_{0}^{\infty}(K)$ where $K\subset\overline{W}_{1}$ is a compact subset.
\end{thm}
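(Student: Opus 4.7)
The plan is to deduce \eqref{carl51} from \eqref{carl6} and \eqref{carl7} by a conjugation argument that shifts every boundary trace down by one half of a Sobolev derivative, combined with a tangential frequency decomposition to recover the desired interior weights.

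First I would apply \eqref{carl7} to the tangentially regularized function $v=\op(\Lambda^{-1/2})w$. Since $\op(\Lambda^{-1/2})\in\mathcal{TO}^{-1/2}$ commutes exactly with $D_{x_{n}}^{2}$ and with the constant-in-$x$ zero-order part of $P$, the only commutators that appear are $[P,\op(\Lambda^{-1/2})]\in\mathcal{TO}^{1/2}$ and $[\op(b_{j}),\op(\Lambda^{-1/2})]$, both of strictly lower order. Using the identities $|v|_{1}=|w|_{1/2}$, $|D_{x_{n}}v|=|D_{x_{n}}w|_{-1/2}$, $|\op(b_{1})v|_{1}=|\op(b_{1})w|_{1/2}$ and $|\op(b_{2})v|=|\op(b_{2})w|_{-1/2}\leq\tau^{-1/2}|\op(b_{2})w|$, and then multiplying through by $\tau$, one arrives at
\begin{equation*}
\tau^{4}\|w\|_{-1/2}^{2}+\tau^{2}\|\nabla w\|_{-1/2}^{2}+\tau^{2}|w|_{1/2}^{2}+\tau^{2}|D_{x_{n}}w|_{-1/2}^{2}\leq C\bigl(\|P(x,D,\tau)w\|^{2}+\tau\|w\|_{1/2}^{2}+\tau^{2}|\op(b_{1})w|_{1/2}^{2}+\tau|\op(b_{2})w|^{2}\bigr),
\end{equation*}
the residual $\tau\|w\|_{1/2}^{2}$ coming from the principal commutator $[P,\op(\Lambda^{-1/2})]w$.

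Since the interior part $\tau^{4}\|w\|_{-1/2}^{2}+\tau^{2}\|\nabla w\|_{-1/2}^{2}$ is weaker than the $\tau^{3}\|w\|^{2}+\tau\|\nabla w\|^{2}$ demanded by \eqref{carl51}, I would add a positive multiple of \eqref{carl6} applied directly to $w$, which supplies those missing interior terms at the cost of reintroducing $\tau|w|_{1}^{2}+\tau|D_{x_{n}}w|^{2}$ on the right. This boundary remainder I would split by a tangential frequency cutoff $\op(\chi_{\tau})$ with $\chi_{\tau}\in\mathcal{TS}_{\tau}^{0}$ supported in $\{|\xi'|\leq M\tau\}$. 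On the low-frequency piece one has $\Lambda\simeq\tau$, so $\tau|w_{\ell}|_{1}^{2}+\tau|D_{x_{n}}w_{\ell}|^{2}\leq C\bigl(\tau^{2}|w|_{1/2}^{2}+\tau^{2}|D_{x_{n}}w|_{-1/2}^{2}\bigr)$, absorbable into the LHS. On the high-frequency piece, for $M$ large enough the principal symbol $p(x,\xi,\tau)$ becomes elliptic, and a standard elliptic regularity estimate with the transmission conditions playing the role of boundary data controls $\tau|w_{h}|_{1}^{2}+\tau|D_{x_{n}}w_{h}|^{2}$ by $\|P(x,D,\tau)w\|^{2}+\tau^{2}|\op(b_{1})w|_{1/2}^{2}+\tau|\op(b_{2})w|^{2}$, up to terms absorbable into $\tau\|\nabla w\|^{2}$. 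The residual error $\tau\|w\|_{1/2}^{2}\leq C\tau^{2}\|w\|^{2}+C\|\nabla w\|^{2}\leq\tfrac{C}{\tau}\bigl(\tau^{3}\|w\|^{2}+\tau\|\nabla w\|^{2}\bigr)$ is then absorbed into the LHS for $\tau\geq\tau_{0}$ large.

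The main obstacle is the non-compact-support issue: neither $\op(\Lambda^{-1/2})$ nor $\op(\chi_{\tau})$ preserves the compact support of $w$ inside $K$, so one cannot directly apply \eqref{carl6} and \eqref{carl7} to the corresponding pieces as if they belonged to $\Cc(K)$. The standard cure, as in Bellassoued's treatment \cite{bellassoued} of transmission Carleman problems, is that the non-local tails of these tangential pseudo-differential operators decay faster than any power of $\tau^{-1}$ outside any neighbourhood of $K$ and hence contribute harmlessly to the right-hand side through commutator estimates; the central technical point is to verify that this rapid decay remains compatible with the trace on $\{x_{n}=0\}$ and with the diagonal $2\times 2$ transmission operators $\op(b_{1}),\op(b_{2})$ modulo acceptable lower-order corrections.
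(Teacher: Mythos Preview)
Your overall strategy---shift by $\op(\Lambda^{-1/2})$ and then recover the missing interior weights---is the same starting idea as the paper's, but the step where you propose to supply $\tau^{3}\|w\|^{2}+\tau\|\nabla w\|^{2}$ by adding \eqref{carl6} and then disposing of the resulting boundary remainder $\tau|w|_{1}^{2}+\tau|D_{x_{n}}w|^{2}$ via a tangential high/low frequency decomposition does not close. In the high-frequency zone $\Lambda\geq M\tau$, the elliptic Lopatinskii estimate for the transmission problem controls the traces only through $|\op(b_{1})w_{h}|_{1}$ (one full tangential derivative) and $|\op(b_{2})w_{h}|$; you get at best
\[
\tau|w_{h}|_{1}^{2}+\tau|D_{x_{n}}w_{h}|^{2}\ \lesssim\ \tau|\op(b_{1})w_{h}|_{1}^{2}+\tau|\op(b_{2})w_{h}|^{2}+\|Pw_{h}\|^{2}.
\]
But on this region $\tau|\op(b_{1})w_{h}|_{1}^{2}\sim(\Lambda/\tau)\,\tau^{2}|\op(b_{1})w_{h}|_{1/2}^{2}\geq M\,\tau^{2}|\op(b_{1})w_{h}|_{1/2}^{2}$, which is strictly larger than the $\tau^{2}|\op(b_{1})w|_{1/2}^{2}$ allowed on the right of \eqref{carl51}. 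The interior quantity $\tau\|\nabla w\|^{2}$ cannot absorb a boundary trace of this strength, so the high-frequency branch of your argument does not close.

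The paper avoids this obstruction by never reintroducing \eqref{carl6}. Instead, after bounding $\|(D_{x_{n}}^{2}+R)\op(\Lambda^{-1/2})v\|^{2}$ by the right-hand side of \eqref{carl7}, it expands this square by integration by parts and uses the pointwise positivity of the tangential principal part $R$ (a G{\aa}rding-type argument, estimates \eqref{carl19}--\eqref{carl26}) to extract the stronger interior norms $\|\op(\Lambda^{3/2})v\|^{2}+\|D_{x_{n}}\op(\Lambda^{1/2})v\|^{2}+\|D_{x_{n}}^{2}\op(\Lambda^{-1/2})v\|^{2}$ on the left. After the substitution $v=\chi\op(\Lambda^{-1/2})w$ (the explicit cutoff $\chi$ is how the paper handles the support issue you raise), these become $\|\op(\Lambda)w\|^{2}+\|D_{x_{n}}w\|^{2}$, which already contain $\tau^{3}\|w\|^{2}+\tau\|\nabla w\|^{2}$ once one multiplies through by $\tau$; all commutators then carry an extra $\tau^{-1}$ and are absorbed. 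In short, the interior upgrade is obtained from the ellipticity of $R$ \emph{before} undoing the $\Lambda^{-1/2}$ shift, rather than from a separate microlocal splitting afterwards.
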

\begin{proof}
We can write the operator $P(x,D,\tau)$ as follow
$$
P(x,D,\tau)=D_{x_{n}}^{2}+R+\tau c_{0}(x)D_{x_{n}}+\tau C_{1}(x)+\tau^{2} c_{0}'(x),
$$
where $c_{0},\,c_{0}'\in \mathcal{TO}^{0}$, $C_{1}\in\mathcal{TO}^{1}$ and $R\in \mathcal{TO}^{2}$ with $\ds R=\sum_{j,k=1}^{n-1}a_{j,k}D_{x_{j}}D_{x_{k}}$. Let $v\in\mathcal{C}_{0}^{\infty}({W}_{1})$, then we have
\begin{equation}\label{carl8}
\begin{split}
\|(D_{x_{n}}^{2}+R)\op(\Lambda^{-\frac{1}{2}})v\|^{2}&\leq C\Big(\|P\op(\Lambda^{-\frac{1}{2}})v\|^{2}+\tau^{2}\|\op(\Lambda^{\frac{1}{2}})v\|^{2}
\\
&\quad +\tau^{2}\|D_{x_{n}}\op(\Lambda^{-\frac{1}{2}})v\|^{2}+\tau^{4}\|\op(\Lambda^{-\frac{1}{2}})v\|^{2}\Big).
\end{split}
\end{equation}

We can estimate the three last terms of the right hand side of \eqref{carl8} as follow
$$
\tau^{2}\|D_{x_{n}}\op(\Lambda^{-\frac{1}{2}})v\|^{2}+\tau^{4}\|\op(\Lambda^{-\frac{1}{2}})v\|^{2}\leq C(\tau\|D_{x_{n}}v\|^{2}+\tau^{3}\|v\|^{2}),
$$
and
\begin{equation}\label{carl28}
\tau^{2}\|\op(\Lambda^{\frac{1}{2}})v\|^{2}=\tau^{2}\left(\frac{1}{\sqrt{\tau}}\op(\Lambda)v,\sqrt{\tau}v\right)\leq C\left(\tau\|\op(\Lambda)v\|^{2}+\tau^{3}\|v\|^{2}\right)\leq C\tau\|\op(\Lambda)v\|^{2}.
\end{equation}
Then following  \eqref{carl8} we obtain
\begin{equation}\label{carl9}
\|(D_{x_{n}}^{2}+R)\op(\Lambda^{-\frac{1}{2}})v\|^{2}\leq C\left(
\|P\op(\Lambda^{-\frac{1}{2}})v\|^{2}+\tau\|\op(\Lambda)v\|^{2}+\tau^{3}\|v\|^{2}
+\tau\| D_{x_{n}}   v\|^{2}
\right).
\end{equation}
Combining \eqref{carl6} and \eqref{carl9} and using the fact that $\tau(\|\op(\Lambda)v\|^{2}+\|D_{x_{n}}v\|^{2})\sim\tau^{3}\|v\|^{2}+\tau\|\nabla v\|^{2}$ we obtain
\begin{equation}\label{carl10}
\|(D_{x_{n}}^{2}+R)\op(\Lambda^{-\frac{1}{2}})v\|^{2}\leq C\left(\|P\op(\Lambda^{-\frac{1}{2}})v\|^{2}+\|Pv\|^{2}+\tau|v|_{1,0,\tau}^{2}\right).
\end{equation}
We can write
\begin{equation}\label{carl11}
\begin{split}
P\op(\Lambda^{-\frac{1}{2}})v&=\op(\Lambda^{-\frac{1}{2}})Pv+[P,\op(\Lambda^{-\frac{1}{2}})]v=\op(\Lambda^{-\frac{1}{2}})Pv+[R,\op(\Lambda^{-\frac{1}{2}})]v
\\
&+\tau[c_{0}(x)D_{x_{n}},\op(\Lambda^{-\frac{1}{2}})]v+\tau[C_{1}(x),\op(\Lambda^{-\frac{1}{2}})]v+\tau^{2}[c_{0}'(x),\op(\Lambda^{-\frac{1}{2}})]v.
\end{split}
\end{equation}
Since $[R,\op(\Lambda^{-\frac{1}{2}})]\in\mathcal{TO}^{\frac{1}{2}}$, then following to \eqref{carl6} we have
\begin{equation}\label{carl12}
\left\|[R,\op(\Lambda^{-\frac{1}{2}})]v\right\|^{2}\leq C\|\op(\Lambda^{\frac{1}{2}})v\|^{2}\leq C\|\op(\Lambda)v\|^{2}\leq C\left(\|Pv\|^{2}+\tau|v|_{1,0,\tau}^{2}\right).
\end{equation}
Since $[c_{0}(x)D_{x_{n}},\op(\Lambda^{-\frac{1}{2}})]\in\mathcal{TO}^{-\frac{1}{2}}D_{x_{n}}$, then following to \eqref{carl6} we have
\begin{equation}\label{carl13}
\tau^{2}\left\|[c_{0}(x)D_{x_{n}},\op(\Lambda^{-\frac{1}{2}})]v\right\|^{2}\leq C\tau^{2}\|\op(\Lambda^{-\frac{1}{2}})D_{x_{n}}v\|^{2}\leq C\tau\|D_{x_{n}}v\|^{2}\leq C\left(\|Pv\|^{2}+\tau|v|_{1,0,\tau}^{2}\right).
\end{equation}
Since $[C_{1}(x),\op(\Lambda^{-\frac{1}{2}})]\in\mathcal{TO}^{-\frac{1}{2}}$ then following to \eqref{carl6} we have
\begin{equation}\label{carl14}
\tau^{2}\left\|[C_{1}(x),\op(\Lambda^{-\frac{1}{2}})]v\right\|^{2}\leq C\tau^{2}\|\op(\Lambda^{-\frac{1}{2}})v\|^{2}\leq C\tau\|v\|^{2}\leq C\left(\|Pv\|^{2}+\tau|v|_{1,0,\tau}^{2}\right).
\end{equation}
Since $[c_{0}'(x),\op(\Lambda^{-\frac{1}{2}})]\in\mathcal{TO}^{-\frac{3}{2}}$, then following to \eqref{carl6} we have
\begin{equation}\label{carl15}
\tau^{4}\left\|[c_{0}'(x),\op(\Lambda^{-\frac{1}{2}})]v\right\|\leq C\tau^{4}\|\op(\Lambda^{-\frac{3}{2}})v\|^{2}\leq C\tau^{3}\|v\|^{2}\leq C\left(\|Pv\|^{2}+\tau|v|_{1,0,\tau}^{2}\right).
\end{equation}
From \eqref{carl11}-\eqref{carl15}, one gets
\begin{equation}\label{carl16}
\|P\op(\Lambda^{-\frac{1}{2}})v\|^{2}\leq C\left(\|Pv\|^{2}+\tau|v|_{1,0,\tau}^{2}\right).
\end{equation}
Then the combination of \eqref{carl10} and \eqref{carl16} gives
\begin{equation}\label{carl17}
\|(D_{x_{n}}^{2}+R)\op(\Lambda^{-\frac{1}{2}})v\|^{2}\leq C\left(\|Pv\|^{2}+\tau|v|_{1,0,\tau}^{2}\right).
\end{equation}
In another hand, by integration by parts we find
\begin{align}\label{carl18}
\|(D_{x_{n}}^{2}+R)\op(\Lambda^{-\frac{1}{2}})v\|^{2}
&=\|D_{x_{n}}^{2}\op(\Lambda^{-\frac{1}{2}})v\|^{2}  \notag \\
&\quad +\|R\op(\Lambda^{-\frac{1}{2}})v\|^{2}
\notag  
+2\re(D_{x_{n}}^{2}\op(\Lambda^{-\frac{1}{2}})v,R\op(\Lambda^{-\frac{1}{2}})v)  \notag \\
& =\|D_{x_{n}}^{2}\op(\Lambda^{-\frac{1}{2}})v\|^{2}+\|R\op(\Lambda^{-\frac{1}{2}})v\|^{2}
\\
&\quad +2\re\Big(i\big(D_{x_{n}}v,R\op(\Lambda^{-1})v\big)_{0}
+i\big(D_{x_{n}}v,[\op(\Lambda^{-\frac{1}{2}}),R]\op(\Lambda^{-\frac{1}{2}})v\big)_{0}\Big)
  \notag \\
&\quad +2\re\big(RD_{x_{n}}\op(\Lambda^{-\frac{1}{2}})v,D_{x_{n}}\op(\Lambda^{-\frac{1}{2}})v\big)  \notag
\\
&\quad +2\re \big(D_{x_{n}}\op(\Lambda^{-\frac{1}{2}})v,[D_{x_{n}},R]\op(\Lambda^{-\frac{1}{2}})v\big).    \notag
\end{align}
Let $\chi_{0}\in\mathcal{C}_{0}^{\infty}(\overline{\R_{+}^{n}})$ be a positive function such that $\chi_{0}\equiv 1$ in the support of $v$ then by integration by parts and using the fact that $(1-\chi_{0})v\equiv 0$ we obtain
\begin{align}\label{carl19}
\left\|\op(\Lambda^{\frac{3}{2}})v\right\|^{2}& =(\op(\Lambda^{2})\op(\Lambda^{\frac{1}{2}})v,\op(\Lambda^{\frac{1}{2}})v)
=\tau^{2}\left\|\op(\Lambda^{\frac{1}{2}})v\right\|^{2}+\sum_{j=1}^{n-1}\left(D_{x_{j}}^{2}\op(\Lambda^{\frac{1}{2}})v,\op(\Lambda^{\frac{1}{2}})v\right)
\notag \\
&=\tau^{2}\left\|\op(\Lambda^{\frac{1}{2}})v\right\|^{2}
+\sum_{j=1}^{n-1}\left(D_{x_{j}}\op(\Lambda^{\frac{1}{2}})v,D_{x_{j}}\op(\Lambda^{\frac{1}{2}})v\right)
\notag \\
&=\tau^{2}\left\|\op(\Lambda^{\frac{1}{2}})v\right\|^{2}
+\sum_{j=1}^{n-1}\left(\chi_{0} D_{x_{j}}\op(\Lambda^{\frac{1}{2}})v,D_{x_{j}}\op(\Lambda^{\frac{1}{2}})v\right)
\\
&\quad +\sum_{j=1}^{n-1}\left([(1-\chi_{0}),D_{x_{j}}\op(\Lambda^{\frac{1}{2}})]v,D_{x_{j}}\op(\Lambda^{\frac{1}{2}})v\right)  \notag
\end{align}
Since $[(1-\chi_{0}),D_{x_{j}}\op(\Lambda^{\frac{1}{2}})]\in\mathcal{TO}^{\frac{1}{2}}$ and $D_{x_{j}}\op(\Lambda^{\frac{1}{2}})\in\mathcal{TO}^{\frac{3}{2}}$ for $j=1,\ldots,n-1$, we show
\begin{equation}\label{carl20}
\left|\sum_{j=1}^{n-1}\left([(1-\chi_{0}),D_{x_{j}}\op(\Lambda^{\frac{1}{2}})]v,D_{x_{j}}\op(\Lambda^{\frac{1}{2}})v\right)\right|\leq C\|\op(\Lambda)v\|^{2}.
\end{equation}
We recall that $\ds\sum_{j,k=1}^{n-1}\chi_{0} a_{j,k}D_{x_{j}}v\overline{D_{x_{k}}v}\geq c\chi_{0}\sum_{j=1}^{n-1}|D_{x_{j}}v|^{2}$, for some constant $c>0$ and using the fact that $[\chi_{0},a_{j,k}D_{x_{j}}\op(\Lambda^{\frac{1}{2}})]\in\mathcal{TO}^{\frac{1}{2}}$ and $D_{x_{k}}\op(\Lambda^{\frac{1}{2}})\in\mathcal{TO}^{\frac{3}{2}}$, we obtain
\begin{align}\label{carl21}
\sum_{j=1}^{n-1}\left(\chi_{0} D_{x_{j}}\op(\Lambda^{\frac{1}{2}})v,D_{x_{j}}\op(\Lambda^{\frac{1}{2}})v\right)&\leq C\sum_{j,k=1}^{n-1}\left(\chi_{0} a_{j,k}D_{x_{j}}\op(\Lambda^{\frac{1}{2}})v,D_{k}\op(\Lambda^{\frac{1}{2}})v\right)   
\\
&\leq C\sum_{j,k=1}^{n-1}\left([\chi_{0},a_{j,k}D_{x_{j}}\op(\Lambda^{\frac{1}{2}})]v,D_{x_{k}}\op(\Lambda^{\frac{1}{2}})v\right)\notag
\\
&+C\sum_{j,k=1}^{n-1}\left(a_{j,k}D_{x_{j}}\op(\Lambda^{\frac{1}{2}})v,D_{x_{k}}\op(\Lambda^{\frac{1}{2}})v\right)\notag
\\
&\leq C\sum_{j,k=1}^{n-1}\left(a_{j,k}D_{x_{j}}\op(\Lambda^{\frac{1}{2}})v,D_{x_{k}}\op(\Lambda^{\frac{1}{2}})v\right)+C\|\op(\Lambda)v\|^{2}.\notag
\end{align}
Integrating by parts the first term of the right hand side of \eqref{carl21}, with
$ \ds R=\sum_{j,k=1}^{n-1}a_{j,k}D_{x_{j}}D_{x_{k}}$, one gets
\begin{align}\label{carl22}
\sum_{j,k=1}^{n-1}\left(a_{j,k}D_{x_{j}}\op(\Lambda^{\frac{1}{2}})v,D_{x_{k}}\op(\Lambda^{\frac{1}{2}})v\right)
&=(R\op(\Lambda^{\frac{1}{2}})v,\op(\Lambda^{\frac{1}{2}})v)
\\
&\quad +\sum_{j,k=1}^{n-1}\left([D_{x_{k}},a_{j,k}]D_{x_{j}}\op(\Lambda^{\frac{1}{2}})v,\op(\Lambda^{\frac{1}{2}})v\right).  \notag
\end{align}
Since $[D_{x_{k}},a_{j,k}]D_{x_{j}}\op(\Lambda^{\frac{1}{2}})\in\mathcal{TO}^{\frac{3}{2}}$, then
\begin{equation}\label{carl23}
\left|\sum_{j,k=1}^{n-1}\left([D_{x_{k}},a_{j,k}]D_{x_{j}}\op(\Lambda^{\frac{1}{2}})v,\op(\Lambda^{\frac{1}{2}})v\right)\right|\leq C\|\op(\Lambda)v\|^{2}.
\end{equation}
Since
\begin{equation}\label{carl24}
(R\op(\Lambda^{\frac{1}{2}})v,\op(\Lambda^{\frac{1}{2}})v)=(R\op(\Lambda^{-\frac{1}{2}})v,\op(\Lambda^{\frac{3}{2}})v)+([\op(\Lambda),R]\op(\Lambda^{-\frac{1}{2}})v,\op(\Lambda^{\frac{1}{2}})v),
\end{equation}
and using the fact that $[\op(\Lambda ),R]\op(\Lambda^{-\frac{1}{2}})\in\mathcal{TO}^{\frac{3}{2}}$ and the Cauchy-Schwarz inequality, we obtain 
\begin{equation}\label{carl25}
\left|(R\op(\Lambda^{\frac{1}{2}})v,\op(\Lambda^{\frac{1}{2}})v)\right|\leq C\left(\epsilon\|\op(\Lambda^{\frac{3}{2}})v\|^{2}+\frac{1}{\epsilon}\|R\op(\Lambda^{-\frac{1}{2}})v\|^{2}+\|\op(\Lambda)v\|^{2}\right).
\end{equation}
Combining \eqref{carl19}--\eqref{carl25}, we obtain for $\epsilon$ small enough
\begin{equation}\label{carl26}
 \|R\op(\Lambda^{-\frac{1}{2}})v\|^{2}\geq C\left(\|\op(\Lambda^{\frac{3}{2}})v\|^{2}-\tau\|\op(\Lambda)v\|^{2}\right),
\end{equation}
where we have used again \eqref{carl28}. The same computation shows
\begin{equation}\label{carl27}
\re\left(RD_{x_{n}}\op(\Lambda^{-\frac{1}{2}})v,D_{x_{n}}\op(\Lambda^{-\frac{1}{2}})v\right)\geq C\left(\|D_{x_{n}}\op(\Lambda^{\frac{1}{2}})v\|^{2}-\tau\|D_{x_{n}}v\|^{2}\right).
\end{equation}
Since $[\op(\Lambda^{-\frac{1}{2}}),R]\op(\Lambda^{-\frac{1}{2}})\in\mathcal{TO}^{0}$ and $R\op(\Lambda^{-1})\in\mathcal{TO}^{1}$, we have
\begin{equation}\label{carl29}
\left|\left(D_{x_{n}}v,R\op(\Lambda^{-1})v\right)_{0}\right|+\left|\left(D_{x_{n}}v,[\op(\Lambda^{-\frac{1}{2}}),R]\op(\Lambda^{-\frac{1}{2}})v\right)_{0}\right|\leq C\left(|D_{x_{n}}v|^{2}+|v|_1^{2}\right)\leq C|v|_{1,0,\tau}^{2},
\end{equation}
and
\begin{equation}\label{carl30}
\left|\left(D_{x_{n}}\op(\Lambda^{-\frac{1}{2}})v,[D_{x_{n}},R]\op(\Lambda^{-\frac{1}{2}})v\right)\right|\leq C\|v\|^{2}+\|\nabla v\|^{2}.
\end{equation}
Putting \eqref{carl17} and \eqref{carl26}--\eqref{carl30} into \eqref{carl18}, we find
\begin{multline}\label{carl31}
\|D_{x_{n}}^{2}\op(\Lambda^{-\frac{1}{2}})v\|^{2}+\|D_{x_{n}}\op(\Lambda^{\frac{1}{2}})v\|^{2}+\|\op(\Lambda^{\frac{3}{2}})v\|^{2}
\\
\leq C\left(\|Pv\|^{2}+\tau^{3}\|v\|^{2}+\tau\|\nabla v\|^{2}+\tau|v|_{1,0,\tau}^{2}\right).
\end{multline}
Following \eqref{carl7} and \eqref{carl31} will be reduced to the following estimate
\begin{multline}\label{carl32}
\|D_{x_{n}}^{2}\op(\Lambda^{-\frac{1}{2}})v\|^{2}+\|D_{x_{n}}\op(\Lambda^{\frac{1}{2}})v\|^{2}+\|\op(\Lambda^{\frac{3}{2}})v\|^{2}+\tau|v|_{1,0,\tau}^{2}
\\
 \leq C\left(\|Pv\|^{2}+\tau|\op(b_{1})v|_{1}^{2}+\tau|\op(b_{2})v|^{2}\right).
\end{multline}

Let $\chi\in\mathcal{C}_{0}^{\infty}(\R^{n})$ such that $\chi\equiv 1$ in the support of $w$. We set $v=\chi\op(\Lambda^{-\frac{1}{2}})w$ and we write
\begin{equation}\label{carl33}
\begin{split}
Pv&=\op(\Lambda^{-\frac{1}{2}})Pw+[P,\op(\Lambda^{-\frac{1}{2}})]w+P[\chi,\op(\Lambda^{-\frac{1}{2}})]w
\\
&=\op(\Lambda^{-\frac{1}{2}})Pw+[P,\op(\Lambda^{-\frac{1}{2}})]w+D_{x_{n}}^{2}[\chi,\op(\Lambda^{-\frac{1}{2}})]w+R[\chi,\op(\Lambda^{-\frac{1}{2}})]w
\\
&\quad +\tau c_{0}(x)D_{x_{n}}[\chi,\op(\Lambda^{-\frac{1}{2}})]w+\tau C_{1}(x)[\chi,\op(\Lambda^{-\frac{1}{2}})]w+\tau^{2}c_{0}'(x)[\chi,\op(\Lambda^{-\frac{1}{2}})]w.
\end{split}
\end{equation}
We have $[\chi,\op(\Lambda^{-\frac{1}{2}})]\in\mathcal{TO}^{-\frac{3}{2}}$, then
\begin{equation}\label{carl34}
\left\|D_{x_{n}}^{2}[\chi,\op(\Lambda^{-\frac{1}{2}})]w\right\|^{2}\leq C\left(\left\|D_{x_{n}}^{2}\op(\Lambda^{-\frac{3}{2}})w\right\|^{2}+\left\|D_{x_{n}}\op(\Lambda^{-\frac{3}{2}})w\right\|^{2}+\left\|\op(\Lambda^{-\frac{3}{2}})w\right\|^{2}\right),
\end{equation}
and
\begin{equation}\label{carl35}
\tau^{2}\left\|c_{0}(x)D_{x_{n}}[\chi,\op(\Lambda^{-\frac{1}{2}})]w\right\|^{2}\leq C\tau^{2}\left(\left\|D_{x_{n}}\op(\Lambda^{-\frac{3}{2}})w\right\|^{2}+\left\|\op(\Lambda^{-\frac{3}{2}})w\right\|^{2}\right)  .
\end{equation}
Since $R[\chi,\op(\Lambda^{-\frac{1}{2}})]\in\mathcal{TO}^{\frac{1}{2}}$, $C_{1}(x)[\chi,\op(\Lambda^{-\frac{1}{2}})]\in\mathcal{TO}^{-\frac{1}{2}}$ and $c_{0}'(x)[\chi,\op(\Lambda^{-\frac{1}{2}})]\in\mathcal{TO}^{-\frac{3}{2}}$, we obtain
\begin{equation}\label{carl36}
\left\|R[\chi,\op(\Lambda^{-\frac{1}{2}})]w\right\|^{2}+\tau^{2}\left\|C_{1}(x)[\chi,\op(\Lambda^{-\frac{1}{2}})]w\right\|^{2}+\tau^{4}\left\|c_{0}'(x)[\chi,\op(\Lambda^{-\frac{1}{2}})]w\right\|^{2}\leq C\left\|\op(\Lambda^{\frac{1}{2}})w\right\|^{2}.
\end{equation}
Since we can write
$$
[P,\op(\Lambda^{-\frac{1}{2}})]=[R,\op(\Lambda^{-\frac{1}{2}})]+\tau[c_{0}(x)D_{x_{n}},\op(\Lambda^{-\frac{1}{2}})]+\tau[C_{1}(x),\op(\Lambda^{-\frac{1}{2}})]+\tau^{2}[c_{0}'(x),\op(\Lambda^{-\frac{1}{2}})],
$$
then by using \eqref{carl12}--\eqref{carl15}, we obtain
\begin{equation}\label{carl37}
\left\|[P,\op(\Lambda^{-\frac{1}{2}})]w\right\|^{2}\leq C\left(\left\|\op(\Lambda^{\frac{1}{2}})w\right\|^{2}+\tau^{-1}\left\|D_{x_{n}}w\right\|^{2}\right).
\end{equation}
Inserting \eqref{carl34}-\eqref{carl37} into \eqref{carl33}, we find
\begin{equation}\label{carl38}
\|Pv\|^{2}\leq C\left(\tau^{-1}\|Pw\|^{2}+\tau^{-1}\|\op(\Lambda)w\|^{2}+\tau^{-1}\|D_{x_{n}}w\|^{2}+\tau^{-1}\|D_{x_{n}}^{2}\op(\Lambda^{-1})w\|^{2}\right).
\end{equation}
We have 
$$
\op(b_{1})v=\op(b_{1})\chi\op(\Lambda^{-\frac{1}{2}})w=\op(\Lambda^{-\frac{1}{2}})\op(b_{1})w+\op(b_{1})[\chi,\op(\Lambda^{-\frac{1}{2}})]w+[\op(b_{1}),\op(\Lambda^{-\frac{1}{2}})]w.
$$
Since $\op(b_{1})\in\mathcal{TO}^{0}$ then $\op(b_{1})[\chi,\op(\Lambda^{-\frac{1}{2}})]\in\mathcal{TO}^{-\frac{3}{2}}$ and $[\op(b_{1}),\op(\Lambda^{-\frac{1}{2}})]\in\mathcal{TO}^{-\frac{3}{2}}$ which gives
\begin{equation}\label{carl39}
\begin{split}
\tau|\op(b_{1})v|_{1}^{2}=\tau|\op(\Lambda)\op(b_{1})v|^{2}&\leq C\left(\tau|\op(\Lambda^{\frac{1}{2}})\op(b_{1})w|^{2}+|\op(\Lambda^{-\frac{1}{2}})w|^{2}\right)
\\
&\leq C\left(\tau|\op(\Lambda^{\frac{1}{2}})\op(b_{1})w|^{2}+\tau^{-2}|\op(\Lambda^{\frac{1}{2}})w|^{2}\right).
\end{split}
\end{equation}
We have
$$
\op(b_{2})v=\op(b_{2})\chi\op(\Lambda^{-\frac{1}{2}})w=\op(\Lambda^{-\frac{1}{2}})\op(b_{2})w+\op(b_{1})[\chi,\op(\Lambda^{-\frac{1}{2}})]w+[\op(b_{2}),\op(\Lambda^{-\frac{1}{2}})]w.
$$
Since $\op(b_{2})\in D_{x_{n}}+\mathcal{TO}^{1}$ then it is clear that $\op(b_{2})[\chi,\op(\Lambda^{-\frac{1}{2}})]\in\mathcal{TO}^{-\frac{3}{2}}D_{x_{n}}+\mathcal{TO}^{-\frac{1}{2}}$ and $[\op(b_{2}),\op(\Lambda^{-\frac{1}{2}})]\in\mathcal{TO}^{-\frac{3}{2}}D_{x_{n}}+\mathcal{TO}^{-\frac{1}{2}}$ hence
\begin{equation}\label{carl40}
\begin{split}
\tau|\op(b_{2})v|^{2}&\leq C\tau\left(|\op(\Lambda^{-\frac{1}{2}})\op(b_{2})w|^{2}+|\op(\Lambda^{-\frac{1}{2}})w|^{2}+|D_{x_{n}}\op(\Lambda^{-\frac{3}{2}})w|^{2}\right)
\\
&\leq C\left(\tau|\op(\Lambda^{-\frac{1}{2}})\op(b_{2})w|^{2}+\tau^{-1}|\op(\Lambda^{\frac{1}{2}})w|^{2}+\tau^{-1}|D_{x_{n}}\op(\Lambda^{-\frac{1}{2}})w|^{2}\right).
\end{split}
\end{equation}
Moreover, we can write
$$
\op(\Lambda)v=\op(\Lambda)\chi\op(\Lambda^{-\frac{1}{2}})w=\op(\Lambda^{\frac{1}{2}})w+\op(\Lambda)[\chi,\op(\Lambda^{-\frac{1}{2}})]w,
$$
since $\op(\Lambda)[\chi,\op(\Lambda^{-\frac{1}{2}})]\in\mathcal{TO}^{-\frac{1}{2}}$  then we get
$$
\tau|\op(\Lambda)v|^{2}\geq\tau|\op(\Lambda^{\frac{1}{2}})w|^{2}-C\tau|\op(\Lambda^{-\frac{1}{2}})w|^{2}\geq\tau|\op(\Lambda^{\frac{1}{2}})w|^{2}-C\tau^{-1}|\op(\Lambda^{\frac{1}{2}})w|^{2},
$$
and for $\tau$ large enough we obtain
\begin{equation}\label{carl41}
\tau|\op(\Lambda^{\frac{1}{2}})w|^{2}\leq C\tau|\op(\Lambda)v|^{2}.
\end{equation}
By using \eqref{carl41} similarly we can prove that for $\tau$ large enough we have
\begin{equation}\label{carl42}
\tau|D_{x_{n}}\op(\Lambda^{-\frac{1}{2}})w|^{2}\leq C\tau|D_{x_{n}}v|^{2}+ C\tau  |v |_1^{2}.
\end{equation}
Recalling that
$$
\tau|v|_{1,0,\tau}^{2}=\tau|v|_{1}^{2}+\tau|D_{n}v|^{2}=\tau|\op(\Lambda)v|^{2}+\tau|D_{n}v|^{2},
$$
and combining \eqref{carl41} and \eqref{carl42}, we obtain
\begin{equation}\label{carl43}
\tau|\op(\Lambda^{\frac{1}{2}})w|^{2}+\tau|D_{x_{n}}\op(\Lambda^{-\frac{1}{2}})w|^{2}\leq C\tau|v|_{1,0,\tau}^{2}.
\end{equation}
Since we have
$$
\op(\Lambda^{\frac{3}{2}})v=\op(\Lambda^{\frac{3}{2}})\chi\op(\Lambda^{-\frac{1}{2}})w=\op(\Lambda)w+\op(\Lambda^{\frac{3}{2}})[\chi,\op(\Lambda^{-\frac{1}{2}})]w
$$
where $\op(\Lambda^{\frac{3}{2}})[\chi,\op(\Lambda^{-\frac{1}{2}})]\in\mathcal{TO}^{0}$ we obtain
\begin{equation}\label{carl44}
\|\op(\Lambda)w\|^{2}-C\|w\|^{2}\leq\|\op(\Lambda^{\frac{3}{2}})v\|^{2}.
\end{equation}
Similarly we can prove also that
\begin{equation}\label{carl45}
\|D_{x_{n}}w\|^{2}-C\left(\|D_{x_{n}}\op(\Lambda^{-1})w\|^{2}+\|\op(\Lambda^{-1})w\|^{2}\right)\leq \|D_{x_{n}}\op(\Lambda^{\frac{1}{2}})v\|^{2},
\end{equation}
and
\begin{multline}\label{carl46}
\|D_{x_{n}}^{2}\op(\Lambda^{-1})w\|^{2}-C\big(\|D_{x_{n}}^{2}\op(\Lambda^{-2})w\|^{2}+\|D_{x_{n}}\op(\Lambda^{-2})w\|^{2}
+\|\op(\Lambda^{-2})w\|^{2}\big) 
\\ \leq \|D_{x_{n}}^{2}\op(\Lambda^{-\frac{1}{2}})v\|^{2}.
\end{multline}
Combining \eqref{carl44}--\eqref{carl46} we find
\begin{multline}\label{carl47}
\|D_{x_{n}}^{2}\op(\Lambda^{-1})w\|^{2}+\|D_{x_{n}}w\|^{2}+\|\op(\Lambda)w\|^{2}
\\
\leq\|D_{x_{n}}^{2}\op(\Lambda^{-\frac{1}{2}})v\|^{2}
+\|D_{x_{n}}\op(\Lambda^{\frac{1}{2}})v\|^{2}+\|\op(\Lambda^{\frac{3}{2}})v\|^{2}.
\end{multline}
Inserting \eqref{carl38}--\eqref{carl40}, \eqref{carl43} and \eqref{carl47} into \eqref{carl32}, we obtain
\begin{align*}
&\|D_{x_{n}}^{2}\op(\Lambda^{-1})w\|^{2}+\|D_{x_{n}}w\|^{2}+\|\op(\Lambda)w\|^{2}
+\tau|\op(\Lambda^{\frac{1}{2}})w|^{2}+\tau|D_{x_{n}}\op(\Lambda^{-\frac{1}{2}})w|^{2}
\\
&\leq C\Big(\tau^{-1}\|Pw\|^{2}+\tau^{-1}\|\op(\Lambda)w\|^{2}+\tau^{-1}\|D_{x_{n}}w\|^{2}+\tau^{-1}\|D_{x_{n}}^{2}\op(\Lambda^{-1})w\|^{2}+\tau|\op(\Lambda^{\frac{1}{2}})\op(b_{1})w|^{2}
\\
&\qquad \quad +\tau^{-2}|\op(\Lambda^{\frac{1}{2}})w|^{2}+\tau|\op(\Lambda^{-\frac{1}{2}})\op(b_{2})w|^{2}+\tau^{-1}|\op(\Lambda^{\frac{1}{2}})w|^{2}+\tau^{-1}|D_{x_{n}}\op(\Lambda^{-\frac{1}{2}})w|^{2}\Big).
\end{align*}
For $\tau$ large enough we yield
 \begin{multline*}
\|D_{x_{n}}w\|^{2}+\|\op(\Lambda)w\|^{2}+\tau|\op(\Lambda^{\frac{1}{2}})w|^{2}+\tau|D_{x_{n}}\op(\Lambda^{-\frac{1}{2}})w|^{2}
\\
\leq C\left(\tau^{-1}\|Pw\|^{2}+\tau|\op(\Lambda^{\frac{1}{2}})\op(b_{1})w|^{2}+\tau|\op(\Lambda^{-\frac{1}{2}})\op(b_{2})w|^{2}\right),
\end{multline*}
which obviously leads to the Carleman estimate. And this end the proof.
\end{proof}

For $u=(u_{1},u_{2})\in H^{1}(\mathcal{U}_{1})\times H^{1}(\mathcal{U}_{2})$ we define the tangential operators $\op (B_{1})$  and $\op(B_{2})$ by
\begin{equation}\label{carl1}
\op(B_{1})u=u_{1|\gamma_{0}}-u_{2|\gamma_{0}}\qquad\text{and}\qquad\op(B_{2})u=\partial_{\nu}u_{1|\gamma_{0}}-\partial_{\nu}u_{2|\gamma_{0}}.
\end{equation} 
We note that $\op (B_{1})$ measure the continuity of the displacement of $u$ through the interface $\gamma_{0}$ where $\op(B_{2})$ describe the difference of the flux through $\gamma_{0}$ of the two sides of the interface.
\begin{cor}
Let $\varphi$ satisfies \eqref{carl2}--\eqref{carl3}. There exist $C>0$ and $\tau_{0}>0$ such that for any $\tau\geq\tau_{0}$ we have the following estimate
\begin{equation}\label{carl50}
\tau^{3}\|\e^{\tau\varphi}u\|^{2}+\tau\|\e^{\tau\varphi}\nabla u\|^{2}+\leq C\left(\|\e^{\tau\varphi}P(x,D)u\|^{2}
+\tau^2 |\e^{\tau\varphi}\op(B_{1})u|_{\frac{1}{2}}^{2}+\tau|\e^{\tau\varphi}\op(B_{2})u|^{2}\right)
\end{equation}
for any $u\in\mathcal{C}_{0}^{\infty}(K)$ where $K\subset\overline{W}_{1}$ is a compact subset.
\end{cor}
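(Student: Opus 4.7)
The plan is to deduce \eqref{carl50} from the Carleman estimate \eqref{carl51} just proved, by the standard conjugation procedure. After reflecting $u_2$ across $\gamma_0 = \{x_n = 0\}$ so that both components are supported in $\overline{W}_1$, I set $w = e^{\tau\varphi} u$ componentwise: $w_j = e^{\tau\varphi_j} u_j$ for $j=1,2$. Since $w \in \mathcal{C}_0^\infty(K)$ whenever $u$ is, estimate \eqref{carl51} applies to $w$, and it remains only to translate each term back to unweighted quantities in $u$.

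The volume source translates by the very definition of the conjugated operator: $P(x,D,\tau) w = e^{\tau\varphi} P(x,D) u$, so $\|P(x,D,\tau)w\|^2 = \|e^{\tau\varphi} P(x,D) u\|^2$. On the left-hand side, $\tau^3 \|w\|^2$ is identically $\tau^3 \|e^{\tau\varphi}u\|^2$. For the gradient term, the Leibniz rule gives $\nabla w_j = e^{\tau\varphi_j}(\nabla u_j + \tau u_j \nabla \varphi_j)$, hence
\begin{equation*}
e^{2\tau\varphi_j} |\nabla u_j|^2 \leq 2|\nabla w_j|^2 + C\tau^2 e^{2\tau\varphi_j} |u_j|^2.
\end{equation*}
Multiplying by $\tau$ and summing in $j$ yields $\tau\|e^{\tau\varphi}\nabla u\|^2 \leq C(\tau\|\nabla w\|^2 + \tau^3\|e^{\tau\varphi}u\|^2)$; the unwanted $\tau^3\|e^{\tau\varphi}u\|^2$ error is already dominated by the $\tau^3\|w\|^2$ term controlled by \eqref{carl51}. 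The boundary traces $\tau^2|w|_{\frac{1}{2}}^2 + \tau^2|D_{x_n} w|_{-\frac{1}{2}}^2$ appearing on the LHS of \eqref{carl51} are simply discarded, as they are not needed in \eqref{carl50}.

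The RHS boundary terms are the delicate bookkeeping step. Using the hypothesis $\varphi_1|_{\gamma_0} = \varphi_2|_{\gamma_0}$, the weight factors cleanly out of $\op(b_1)$, giving $\op(b_1) w = e^{\tau\varphi}(u_1 - u_2) = e^{\tau\varphi} \op(B_1) u$ on $\{x_n = 0\}$. For the second operator, the crucial algebraic identity $(D_{x_n} + i\tau \partial_{x_n}\varphi_j) w_j = e^{\tau\varphi_j} D_{x_n} u_j$ yields $\op(b_2) w = e^{\tau\varphi}(D_{x_n} u_1 + D_{x_n} u_2)$, where the plus sign between the two contributions matches the sign flip $\partial_\nu \tilde u_2 = -\partial_{x_n} u_2$ induced by the reflection of $u_2$ across $\gamma_0$ (recall $\nu$ is the outer normal to $\mathcal{U}_2$). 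Up to the harmless unimodular factor coming from $D = -i\partial$, this identifies $\op(b_2) w$ with $e^{\tau\varphi} \op(B_2) u$. Substituting into \eqref{carl51} produces \eqref{carl50}.

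The main obstacle — indeed the only nontrivial point in what is otherwise a routine translation — is keeping track of the sign conventions when identifying $\op(b_2) w$ with $e^{\tau\varphi} \op(B_2) u$ after the reflection; the continuity assumption $\varphi_1|_{\gamma_0} = \varphi_2|_{\gamma_0}$ is exactly what is needed so that the exponential weight factors out of both transmission operators without generating extra boundary remainders.
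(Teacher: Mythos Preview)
Your proof is correct and follows exactly the paper's approach: set $w=e^{\tau\varphi}u$, use the defining identity $P(x,D,\tau)w=e^{\tau\varphi}P(x,D)u$ together with $\op(b_j)w=e^{\tau\varphi_1}\op(B_j)u$ on $\gamma_0$ (which relies on $\varphi_1|_{\gamma_0}=\varphi_2|_{\gamma_0}$), and read off \eqref{carl50} from \eqref{carl51}. Your write-up is in fact more detailed than the paper's, which dispatches the corollary in two lines; in particular your explicit handling of the gradient via Leibniz and of the sign bookkeeping in $\op(b_2)w$ versus $\op(B_2)u$ after the reflection makes explicit what the paper leaves to the reader.
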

\begin{proof}
Let $w=\e^{\tau\varphi}u$ and we recall that $P(x,D,\tau)w=\e^{\tau\varphi}P(x,D)u$, $\op(b_{1})w=\e^{\tau\varphi_{1}}.\op(B_{1})u$ and $\op(b_{2})w=\e^{\tau\varphi_{1}}.\op(B_{2})u$ then using the fact that $\varphi_{1}$ and $\varphi_{2}$ have the same trace on $\gamma_{0}$ and estimate \eqref{carl51} we obtain \eqref{carl50}.
\end{proof}
Now we can state the global Carleman estimate in $\mathcal{U}_{1}$ and $\mathcal{U}_{2}$ (defined in the beginning of this section page \pageref{carl53}) which is given by the following theorem.
\begin{thm}\label{carl5}
Assume that $\varphi$ satisfies
\begin{eqnarray}
|\nabla\varphi_{k}(x)|> 0,\;\forall\,x\in\overline{\mathcal{U}}_{k},\quad k=1,2,\label{carl48}
\\
\partial_{\nu}\varphi_{|\gamma}(x)< 0,\label{carl52}
\\
\partial_{\nu}\varphi_{k|\gamma_{0}}(x)>0,\quad k=1,2,
\\
\left(\partial_{\nu}\varphi_{1|\gamma_{0}}(x)\right)^{2}-\left(\partial_{\nu}\varphi_{2|\gamma_{0}}(x)\right)^{2}>1,
\end{eqnarray}
and the sub-ellipticity condition
\begin{equation}\label{carl49}
\exists\,c>0,\;\forall\,(x,\xi)\in\overline{\mathcal{U}}_{k}\times\R^{n},\;p_{k}(x,\xi)=0\,\Longrightarrow\,\left\{\mathrm{Re}(p_{k}),\mathrm{Im}(p_{k})\right\}(x,\xi,\tau)\geq c\langle\xi,\tau\rangle^{3}.
\end{equation}
Then there exist $C>0$ and $\tau_{0}>0$ such that we have the following estimate
\begin{align}\label{carl4}
&\tau^{3}\|\e^{\tau\varphi}u\|_{L^{2}(\mathcal{U})}^{2}+\tau\|\e^{\tau\varphi}\nabla u\|_{L^{2}(\mathcal{U})}^{2}
\\
&\qquad  \leq C\Big(\|\e^{\tau\varphi}Pu\|_{L^{2}(\mathcal{U})}^{2}+\tau^{2}\|\e^{\tau\varphi}\op(B_{1})u\|_{H^{\frac{1}{2}}(\gamma_{0})}^{2}
+\tau\|\e^{\tau\varphi}\op(B_{2})u\|_{L^{2}(\gamma_{0})}^{2}\Big)   \notag
\end{align}
for all $\tau\geq\tau_{0}$ and $u=(u_{1},u_{2})\in H^{2}(\mathcal{U}_{1})\times H^{2}(\mathcal{U}_{2})$ such that $u_{2|\gamma}=0$.
\end{thm}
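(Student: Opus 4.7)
The plan is to derive the global estimate \eqref{carl4} from the local transmission Carleman estimate \eqref{carl50}, the classical interior H\"ormander-type sub-elliptic Carleman estimate, and the classical boundary Carleman estimate with Dirichlet data, by means of a finite partition of unity adapted to the geometry of $\mathcal{U}_1$, $\mathcal{U}_2$, $\gamma$ and $\gamma_0$.

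First, I would cover $\overline{\mathcal{U}}$ by a finite family of open sets $\{V_j\}_{j=1}^{N}$ of three types: sets compactly contained in $\mathcal{U}_1$ or in $\mathcal{U}_2$ away from both $\gamma$ and $\gamma_0$; sets localized near a point of the outer boundary $\gamma$ (entirely contained in $\overline{\mathcal{U}}_2$); and sets localized near a point of the interface $\gamma_0$, chosen, in the normal geodesic coordinates of the local setting on page~\pageref{carl53}, as neighborhoods $W$ symmetric with respect to $x_n\mapsto -x_n$. I would pick a subordinate partition of unity $\{\chi_j\}$ with the cutoffs near $\gamma_0$ symmetric in $x_n$, so that the reduction-to-one-side identification used to derive \eqref{carl50} is preserved by multiplication by $\chi_j$. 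To $\chi_j u$ I would then apply the corresponding local estimate: the standard H\"ormander sub-elliptic Carleman estimate (ensured by $|\nabla\varphi_k|>0$ together with \eqref{carl49}) for the interior pieces; the classical Lebeau--Robbiano boundary Carleman estimate, valid because \eqref{carl52} gives $\partial_\nu\varphi_{|\gamma}<0$ and $u_2$ satisfies the Dirichlet condition $u_{2|\gamma}=0$, for the boundary pieces; and estimate \eqref{carl50} for the interface pieces.

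Next, I would sum these local inequalities over $j$. The key observation is that $P(\chi_j u)=\chi_j Pu+[P,\chi_j]u$, and $[P,\chi_j]$ is a first-order differential operator with smooth coefficients independent of $\tau$. Hence
$$
\sum_j \|e^{\tau\varphi}P(\chi_j u)\|_{L^{2}(\mathcal{U})}^{2}\leq C\|e^{\tau\varphi}Pu\|_{L^{2}(\mathcal{U})}^{2}+C\bigl(\|e^{\tau\varphi}u\|_{L^{2}(\mathcal{U})}^{2}+\|e^{\tau\varphi}\nabla u\|_{L^{2}(\mathcal{U})}^{2}\bigr).
$$
Likewise, because the cutoffs near $\gamma_0$ are symmetric in $x_n$, the normal derivative of $\chi_j$ vanishes on $\{x_n=0\}$, so $\op(B_1)(\chi_j u)=\chi_j\op(B_1)u$ and $\op(B_2)(\chi_j u)=\chi_j\op(B_2)u$ on $\gamma_0$; after summation the interface traces reassemble into $\op(B_1)u$ and $\op(B_2)u$. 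For $\tau\geq\tau_0$ sufficiently large, the lower-order contributions $C\|e^{\tau\varphi}u\|^{2}+C\|e^{\tau\varphi}\nabla u\|^{2}$ on the right are absorbed by $\tau^{3}\|e^{\tau\varphi}u\|^{2}+\tau\|e^{\tau\varphi}\nabla u\|^{2}$ on the left, yielding \eqref{carl4}.

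The main obstacle is the bookkeeping near $\gamma_0$: one must choose the partition of unity to be symmetric in $x_n$ in the local normal coordinates so that the local transmission estimate can be invoked coherently for $\chi_j u$, and one must verify that the commutators $[P,\chi_j] u$ coming from cutoffs straddling the interface do not create uncontrolled jump terms on $\gamma_0$. Once this compatibility is ensured, combining the three families of local estimates and absorbing lower-order contributions for large $\tau$ is routine.
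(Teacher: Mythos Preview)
Your proposal is correct and follows precisely the standard localization argument the paper relies on implicitly: the paper proves the local interface estimate \eqref{carl50} in the Corollary and then states the global theorem without proof, noting only that ``the proof of the theorem is local,'' which is exactly the partition-of-unity patching (interior H\"ormander estimate, Dirichlet boundary estimate using $\partial_\nu\varphi_{|\gamma}<0$, and \eqref{carl50} near $\gamma_0$) that you outline. Your care with the symmetry of the interface cutoffs so that the reflection-to-one-side reduction and the operators $\op(B_1),\op(B_2)$ are preserved is the right point to flag; the remaining commutator bookkeeping is routine.
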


Actually a weight functions with assumptions \eqref{carl48}-\eqref{carl49} can not exist. So, since the proof of the theorem is local then we can do without the conditions \eqref{carl48} and \eqref{carl52} in some region where the entries is supposed to be vanishing around the critical points of the weight functions and where the damping is active. Next, the missing information will be recuperated with a new entries which vanishing far away when the first do (See next section).
\section{Stabilization result}\label{stab}
In this section, we will prove the logarithmic stability of the system \eqref{wave1}. To this end, we establish a particular resolvent estimate precisely we will show that for some constant $C>0$ we have
\begin{equation}\label{Swave24}
\|(\mathcal{A}-i\mu\,I)^{-1}\|_{\mathcal{L}(\mathcal{H})}\leq C\e^{C|\mu|},\qquad \forall\,|\mu|\gg 1,
\end{equation}
and then by Burq's result \cite{burq} and  the remark of Duyckaerts \cite[section 7]{Duyckaerts} (see also \cite{batty,borichevtomilov}) we obtain the expected decay rate of the energy.

Let $\mu$ be a real number such that $|\mu|$ is large, and assume that
\begin{equation}\label{Swave1}
(\mathcal{A}-i\mu\,I)(u,v)^{t}=(f,g)^{t},\quad (u,v)\in\mathcal{D}(\mathcal{A}),\quad (f,g)\in\mathcal{H}.
\end{equation}
which can be written as follow
\begin{equation*}
\left\{\begin{array}{ll}
v-i\mu u=f&\text{in }\Omega
\\
\Delta u+\mathrm{div}(a(x)\nabla v)-i\mu v=g&\text{in }\Omega,
\end{array}\right.
\end{equation*}
or equivalently,
\begin{equation}\label{Swave2}
\left\{\begin{array}{ll}
v=f+i\mu u&\text{in }\Omega
\\
\Delta u+i\mu\mathrm{div}(a(x)\nabla u)+\mu^{2}u=g+i\mu f-\mathrm{div}(a(x)\nabla f)&\text{in }\Omega.
\end{array}\right.
\end{equation}
Multiplying the second line of \eqref{Swave2} by $\overline{u}$ and integrating over $\Omega$ then by Green's formula we obtain
\begin{equation}\label{Swave3}
\int_{\Omega}(g-i\mu f)\overline{u}\,\ud x+d\int_{\omega}\nabla f.\nabla\overline{u}\,\ud x=\mu^{2}\int_{\Omega}|u|^{2}\,\ud x-\int_{\Omega}|\nabla u|^{2}\,\ud x-id\mu\int_{\omega}|\nabla u|^{2}\,\ud x.
\end{equation}
Taking the imaginary part of \eqref{Swave3} and using the Cauchy-Schwarz inequality and Poincar\'e inequality we find
\begin{equation}\label{Swave4}
d|\mu|\int_{\omega}|\nabla u|^{2}\,\dd x\leq C\left(\mu^{2}\int_{\Omega}|\nabla f|^{2}\,\ud x+\int_{\Omega}|g|^{2}\,\ud x\right).
\end{equation}
By setting $u=u_{1}\,\mathbb{1}_{\omega}+ u_{2}\, \mathbb{1}_{\Omega\setminus\bar{\omega}}$, $v
=v_{1}\,\mathbb{1}_{\omega}+v_{2}\,\mathbb{1}_{\Omega\setminus\bar{\omega}}$, 
$f=f_{1}\,\mathbb{1}_{\omega}+f_{2}\, \mathbb{1}_{\Omega \setminus \bar{\omega}}$ and 
$g=g_{1}\,\mathbb{1}_{\omega}+g_{2}\,\mathbb{1}_{\Omega\setminus\bar{\omega}}$ system 
\eqref{Swave2} is transformed to the following transmission equation   
\begin{equation}\label{Swave5}
\left\{\begin{array}{ll}
v_{1}=i\mu u_{1}+f_{1}&\text{in }\omega
\\
v_{2}=i\mu u_{2}+f_{2}&\text{in }\Omega\backslash\overline{\omega}
\\
\Delta((1+id\mu) u_{1}+df_{1})+\mu^{2} u_{1}=g_{1}+i\mu f_{1}&\text{in }\omega
\\
\Delta u_{2}+\mu^{2}u_{2}=g_{2}+i\mu f_{2}&\text{in }\Omega\backslash\overline{\omega},
\end{array}\right.
\end{equation}
with the transmission conditions
\begin{equation}\label{Swave6}
\left\{\begin{array}{ll}
u_{1}=u_{2}&\text{on }\mathcal{I}
\\
\partial_{\nu}((1+id\mu)u_{1}+df_{1})=\partial_{\nu}u_{2}&\text{on }\mathcal{I},
\end{array}\right.
\end{equation}
and the boundary condition
\begin{equation}\label{Swave7}
u_{2}=0\quad\text{on }\Gamma,
\end{equation}
where $\nu(x)$ denote the outer unit normal to $\Omega\setminus\omega$ on $\Gamma$ and on $\mathcal{I}$ (see Figure \ref{fig1}). 

To prove Theorem \ref{LogStab} we need the following technical lemma
\begin{lem} 
\label{lem: Swave8}
Let $\mathcal{O}$ be a bounded open set of $\R^{n}$. Then there exist $C>0$ and $\mu_{0}>0$, such that for any $w$ and $F$ satisfying
$$
\Delta w+\frac{\mu^{2}}{1+id\mu}w=F\quad\text{in }\mathcal{O}
$$
and for all $|\mu|>\mu_{0}$ we have the following estimate
\begin{equation}\label{Swave8}
\|w\|_{H^{1}}^{2}\leq C\left(\|\nabla w\|_{L^{2}(\mathcal{O})}^{2}+\|F\|_{L^{2}(\mathcal{O})}^{2}\right).
\end{equation} 
\end{lem}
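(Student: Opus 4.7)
The plan is to exploit the fact that, for $|\mu|$ large, the coefficient $\alpha := \mu^{2}/(1+id\mu)$ has real part bounded below by a positive constant. A direct computation gives
\begin{equation*}
\re(\alpha) = \frac{\mu^{2}}{1+d^{2}\mu^{2}}\;\xrightarrow[|\mu|\to\infty]{}\;\frac{1}{d^{2}},
\end{equation*}
so one can fix $\mu_{0}$ large enough that $\re(\alpha) \ge c_{0} := 1/(2d^{2}) > 0$ for every $|\mu| \ge \mu_{0}$. This uniform positive lower bound, together with the fact that $|\im(\alpha)| \sim |\mu|/d$ is large, will be the driving force of the estimate.

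The core of the proof is a standard energy identity. I would first multiply the equation by $\overline{w}$, integrate over $\mathcal{O}$, and apply Green's formula to obtain
\begin{equation*}
\alpha\,\|w\|_{L^{2}(\mathcal{O})}^{2}-\|\nabla w\|_{L^{2}(\mathcal{O})}^{2}+\int_{\partial\mathcal{O}}\partial_{\nu}w\,\overline{w}\,\dd\sigma = \int_{\mathcal{O}}F\overline{w}\,\dd x.
\end{equation*}
Taking the real part isolates $\re(\alpha)\|w\|_{L^{2}}^{2}$ on one side. Combining the lower bound $\re(\alpha)\ge c_{0}$ with Cauchy--Schwarz on $\int F\overline{w}$ and Young's inequality to absorb a $\tfrac{c_{0}}{2}\|w\|_{L^{2}}^{2}$ term on the left, I arrive at
\begin{equation*}
\|w\|_{L^{2}(\mathcal{O})}^{2}\le C\Big(\|\nabla w\|_{L^{2}(\mathcal{O})}^{2}+\|F\|_{L^{2}(\mathcal{O})}^{2}+\Big|\int_{\partial\mathcal{O}}\partial_{\nu}w\,\overline{w}\,\dd\sigma\Big|\Big),
\end{equation*}
with $C$ independent of $\mu$. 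Adding $\|\nabla w\|_{L^{2}}^{2}$ on both sides converts this into the desired $H^{1}$ bound, up to the boundary contribution.

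The step I expect to be the main obstacle is eliminating the boundary contribution, since the lemma imposes no boundary condition on $w$. The natural fix is a localization: apply the identity to $\chi w$ with $\chi\in C_{c}^{\infty}(\mathcal{O})$ so that the boundary integral vanishes, and treat the commutator $[\Delta,\chi]w = 2\nabla\chi\cdot\nabla w + (\Delta\chi)w$ as an added $L^{2}$ source whose norm is controlled by $\|\nabla w\|_{L^{2}(\mathcal{O})}^{2}+\|w\|_{L^{2}(\mathcal{O})}^{2}$; the lower-order $\|w\|_{L^{2}}^{2}$ piece is then reabsorbed on the left for $|\mu|$ large, again thanks to the uniform coercivity $\re(\alpha)\ge c_{0}$. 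This uniform coercivity of the coefficient is the conceptual engine making the final constant $C$ independent of $\mu$ for $|\mu|\ge\mu_{0}$.
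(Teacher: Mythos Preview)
Your localization only yields an estimate for $\|\chi w\|_{L^{2}}$ with $\chi\in C_{c}^{\infty}(\mathcal{O})$; it says nothing about $w$ in the boundary layer $\mathcal{O}\setminus\{\chi=1\}$, and the lemma imposes no boundary condition that would let you ignore that region. The ``reabsorption'' you invoke cannot close this gap, for two reasons. First, on the left you have $\re(\alpha)\,\|\chi w\|_{L^{2}}^{2}$, not $\re(\alpha)\,\|w\|_{L^{2}(\mathcal{O})}^{2}$, so a term $\|w\|_{L^{2}(\mathcal{O})}^{2}$ coming from the commutator (supported where $\nabla\chi\neq 0$, precisely where $\chi$ is \emph{not} identically $1$) cannot be swallowed by the left side. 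Second, $\re(\alpha)\to 1/d^{2}$ is uniformly bounded above and below, so ``for $|\mu|$ large'' gives no additional room for absorption; the phrase ``thanks to the uniform coercivity $\re(\alpha)\ge c_{0}$'' is a non sequitur here.

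The paper's proof runs in two genuinely different steps. For the interior it multiplies the equation by $\chi^{2}\overline{w}$ (rather than localizing first and then multiplying by $\overline{\chi w}$); this produces only the cross term $2\int \nabla\chi\cdot\nabla w\,\chi\overline{w}$, which is bounded by $C\|\nabla w\|\,\|\chi w\|$ and is therefore directly absorbable---the troublesome $(\Delta\chi)w$ piece never appears. For the boundary layer the paper uses a separate argument: in normal coordinates $(x',x_{n})$ near $\partial\mathcal{O}$ one writes
\[
w(x',x_{n})=w(x',\varepsilon)-\int_{x_{n}}^{\varepsilon}\partial_{x_{n}}w(x',t)\,\ud t,
\]
controls the integral by $\|\nabla w\|_{L^{2}}$, and controls $w(x',\varepsilon)$ by a trace estimate which is in turn bounded by the interior estimate applied with a cutoff equal to $1$ near $\{x_{n}=\varepsilon\}$. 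This boundary-layer step is entirely absent from your proposal and is the missing idea.
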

\begin{proof}
We need to distinguish two cases

\underline{Inside $\mathcal{O}$}: Let $\chi\in\mathcal{C}_{0}^{\infty}(\mathcal{O})$, we have by integration by parts
$$
\int_{\mathcal{O}}\left(\Delta w+\frac{\mu^{2}}{1+id\mu}w\right).\chi^{2}\overline{w}\,\ud x=\frac{\mu^{2}}{1+id\mu}\|\chi w\|_{L^{2}(\mathcal{O})}^{2}-\int_{\mathcal{O}}|\chi \nabla w|^{2}\,\ud x-2\int_{\mathcal{O}}\nabla\chi.\nabla w\chi\overline{w}\,\ud x. 
$$
Then we obtain
$$
\frac{\mu^{2}}{\sqrt{1+d^{2}\mu^{2}}}\|\chi w\|_{L^{2}(\mathcal{O})}^{2}\leq C\left(\|F\|_{L^{2}(\mathcal{O})}.\|\chi^{2}w\|_{L^{2}(\mathcal{O})}+\|\nabla w\|_{L^{2}(\mathcal{O})}^{2}+\|\nabla w\|_{L^{2}(\mathcal{O})}.\|\chi w\|_{L^{2}(\mathcal{O})}\right).
$$
Using Cauchy-Schwarz inequality and for $|\mu|$ large enough, one gets
\begin{equation}\label{Swave9}
\|\chi w\|_{L^{2}(\mathcal{O})}^{2}\leq C\left(\|\nabla w\|_{L^{2}(\mathcal{O})}^{2}+\|F\|_{L^{2}(\mathcal{O})}^{2}\right).
\end{equation}
hence the result inside $\mathcal{O}$.

\underline{In the neighborhood of the boundary}: Let $x=(x',x_{n})\in\R^{n-1}\times\R$. then
$$
\partial\mathcal{O}=\{x\in\R^{n},\; x_{n}=0\}.
$$
Let $\varepsilon>0$ such that $0<x_{n}<\varepsilon$. Then we have
$$
w(x,\varepsilon)-w(x',x_{n})=\int_{x_{n}}^{\varepsilon}\partial_{x_{n}}w(x',t)\,\ud t.
$$
It follows
$$
|w(x',x_{n})|^{2}\leq 2|w(x',\varepsilon)|^{2}+2\left(\int_{x_{n}}^{\varepsilon}|\partial_{x_{n}}w(x',t)|\,\ud t\right)^{2}.
$$
Using the Cauchy-Schwarz inequality, we obtain
$$
|w(x',x_{n})|^{2}\leq 2|w(x',\varepsilon)|^{2}+2\varepsilon \int_{x_{n}}^{\varepsilon}|\partial_{x_{n}}w(x',t)|^{2}\,\ud t.
$$
Integrating with respect to $x'$, we obtain
\begin{equation}\label{Swave10}
\int_{|x'|<\varepsilon}|w(x',x_{n})|^{2}\,\ud x'\leq 2\int_{|x'|<\varepsilon}|w(x',\varepsilon)|^{2}\,\ud x'+2\varepsilon\int_{|x'|<\varepsilon}\int_{|x_{n}|<\varepsilon}|\partial_{x_{n}}w(x',t)|^{2}\,\ud t\,\ud x'.
\end{equation}
Using the trace theorem, we have
\begin{equation}\label{Swave11}
\int_{|x'|<\varepsilon}|w(x',\varepsilon)|^{2}\,\ud x'\leq C\int_{|x'|<2\varepsilon,|x_{n}-\varepsilon|<\frac{\varepsilon}{2}}\left(|w(x)|^{2}+|\nabla w(x)|^{2}\right)\,\ud x.
\end{equation}
We introduce the following cut-off functions
$$
\chi_{1}(x)=\left\{\begin{array}{lll}
1&\text{if}&\ds 0<x_{n}<\frac{\varepsilon}{2}
\\
0&\text{if}&\ds x_{n}>\varepsilon,
\end{array}\right.
$$
and
$$
\chi_{2}(x)=\left\{\begin{array}{lll}
1&\text{if}&\ds\frac{\varepsilon}{2}<x_{n}<\frac{3\varepsilon}{2}
\\
0&\text{if}&\ds x_{n}<\frac{\varepsilon}{4},\quad x_{n}>2\varepsilon.
\end{array}\right.
$$
Combining \eqref{Swave10} and \eqref{Swave11}, we obtain for $\varepsilon$ small enough
\begin{equation}\label{Swave12}
\|\chi_{1} w\|^{2}\leq C\left(\|\chi_{2}w\|^{2}+\|\nabla w\|^{2}\right).
\end{equation}
From \eqref{Swave9}, we have
\begin{equation}\label{Swave13}
\|\chi_{2} w\|^{2}\leq C\left(\|\nabla w\|^{2}+\|F\|^{2}\right).
\end{equation}
Inserting \eqref{Swave13} into \eqref{Swave12} we find
\begin{equation}\label{Swave14}
\|\chi_{1} w\|^{2}\leq C\left(\|\nabla w\|^{2}+\|F\|^{2}\right).
\end{equation}
hence the result in the neighborhood of the boundary.

Following to \eqref{Swave9}, we can write
\begin{equation}\label{Swave15}
\|(1-\chi_{1})w\|^{2}\leq C\left(\|\nabla w\|^{2}+\|F\|^{2}\right).
\end{equation}
Adding \eqref{Swave14} and \eqref{Swave15} we obtain \eqref{Swave8}.
\end{proof}
Now we can prove Theorem \ref{LogStab}. We set $w_{1}=(1+id\mu)u_{1}+df_{1}$ and $w_{2}=u_{2}$, then the system \eqref{Swave5}-\eqref{Swave7} can be recast as follow
\begin{equation}\label{Swave16}
\left\{\begin{array}{ll}
\ds\Delta w_{1}+\frac{\mu^{2}}{1+id\mu}w_{1}=\Phi_{1}&\text{in }\omega
\\
\ds\Delta w_{2}+\mu^{2}w_{2}=\Phi_{2}&\text{in }\Omega\setminus\omega,
\end{array}\right.
\end{equation}
with the transmission conditions
\begin{equation}\label{Swave17}
\left\{\begin{array}{ll}
w_{1}=w_{2}+\phi&\text{on }\mathcal{I}
\\
\partial_{\nu}w_{1}=\partial_{\nu}w_{2}&\text{on }\mathcal{I},
\end{array}\right.
\end{equation}
and the boundary condition
\begin{equation}\label{Swave18}
\begin{array}{ll}
w_{2}=0&\text{on }\Gamma,
\end{array}
\end{equation}
where we have denoted by $\ds\Phi_{1}=g_{1}+\frac{i\mu}{1+id\mu}f_{1}$, $\ds\Phi_{2}=g_{2}+i\mu f_{2}$ and $\ds\phi=df_{1}+id\mu u_{1}$.

We denoted by $B_{r}$ a ball of radius $r>0$ in $\omega$ and $B_{r}^{c}$ its complementary such that $B_{4r}\subset\omega$. Let's introduce the cut-off function $\chi\in\mathcal{C}^{\infty}(\omega)$ by
$$
\chi(x)=\left\{\begin{array}{ll}
1&\text{in } B_{3r}^{c}
\\
0&\text{in } B_{2r}.
\end{array}\right.
$$
Next, we denote by $\widetilde{w}_{1}=\chi w_{1}$ then from the first line of \eqref{Swave16}, one sees that
\begin{equation}\label{Swave19}
\begin{array}{ll}
\ds\Delta\widetilde{w}_{1}+\frac{\mu^{2}}{1+id\mu}\widetilde{w}_{1}=\widetilde{\Phi}_{1}&\text{in }\omega,
\end{array}
\end{equation}
where $\widetilde{\Phi}_{1}=\chi\Phi_{1}-[\Delta,\chi]w_{1}$.  We denote by $\Omega_{1}=\omega\setminus\overline{B}_{r}$ and $\Omega_{2}=\Omega\setminus\overline{\omega}$.

According to \cite{burq}, \cite{hassine2} or \cite{hassine3} we can find four weight functions $\varphi_{1,1}$, $\varphi_{1,2}$, $\varphi_{2,1}$ and $\varphi_{2,2}$, a finite number of points $x_{j,k}^{i}$ where $\overline{B(x_{j,k}^{i},2\varepsilon)}\subset\Omega_{j}$ for all $j,k=1,2$ and $i=1,\ldots,N_{i,k}$ such that by denoting $U_{j,k}=\ds\Omega_{k}\bigcap\left(\bigcup_{i=1}^{N_{j,k}}\overline{B(x_{j,k}^{i},\epsilon)}\right)^{c}$, the weight function $\varphi_{k}=\mathrm{diag}(\varphi_{1,k},\varphi_{2,k})$ verifying the assumption \eqref{carl48}-\eqref{carl49} in $U_{1,k}\cup U_{2,k}$ with $\gamma_{1}=\partial B_{r}$, $\gamma_{2}=\Gamma$ and $\gamma=\mathcal{I}$. Moreover, $\varphi_{j,k}<\varphi_{j,k+1}$ in $\ds\bigcup_{i=1}^{N_{j,k}}B(x_{j,k}^{i},2\epsilon)$ for all $j,k=1,2$ where we denoted by $\varphi_{j,3}=\varphi_{j,1}$.

Let $\chi_{j,k}$ (for $j,k=1,2$) four cut-off functions equal to $1$ in $\ds\left(\bigcup_{i=1}^{N_{j,k}}B(x_{j,k}^{i},2\epsilon)\right)^{c}$ and supported in $\ds\left(\bigcup_{i=1}^{N_{j,k}}B(x_{j,k}^{i},\epsilon)\right)^{c}$ (in order to eliminate the critical points of the weight functions $\varphi_{j,k}$). We set $w_{1,1}=\chi_{1,1}\widetilde{w}_{1}$, $w_{1,2}=\chi_{1,2}\widetilde{w}_{1}$, $w_{2,1}=\chi_{2,1}w_{2}$ and $w_{2,2}=\chi_{2,2}w_{2}$. Then from system \eqref{Swave17} and equations \eqref{Swave7} and \eqref{Swave19}, then for $k=1,2$ we obtain
\begin{equation}\label{Swave20}
\left\{\begin{array}{ll}
\ds\Delta w_{1,k}+\frac{\mu^{2}}{1+id\mu}w_{1,k}=\Psi_{1,k}&\text{in }\omega
\\
\Delta w_{2,k}+\mu^{2}w_{2,k}=\Psi_{2,k}&\text{in }\Omega\setminus\omega
\\
w_{1,k}=w_{2,k}+\phi&\text{on }\mathcal{I}
\\
\partial_{\nu}w_{1,k}=  \partial_{\nu} w_{2,k}&\text{on }\mathcal{I}
\\
w_{2,k}=0&\text{on }\Gamma,
\end{array}\right.
\end{equation}
where
\begin{equation}\label{Swave21}
\left\{\begin{array}{l}
\Psi_{1,k}=\chi_{1,k}\widetilde{\Phi}_{1}-[\Delta,\chi_{1,k}]\widetilde{w}_{1}
\\
\Psi_{2,k}=\chi_{2,k}\Phi_{2}-[\Delta,\chi_{2,k}]w_{2}.
\end{array}\right.
\end{equation}
Applying now Carleman estimate \eqref{carl4} to the system \eqref{Swave20} with $\tau=|\mu|$ then for $k=1,2$ we have
\begin{multline*}
\tau^{3 }\sum_{j=1,2}\|\e^{\tau\varphi_{j,k}}w_{j,k}\|_{L^{2}(U_{j,k})}^{2}
+\tau\sum_{j=1,2}\|\e^{\tau\varphi_{j,k}}\nabla w_{j,k}\|_{L^{2}(U_{j,k})}^{2}
\\
\leq C\Big(\|\e^{\tau\varphi_{1,k}}\Psi_{1,k}\|_{L^{2}(U_{1,k})}^{2}
+\|\e^{\tau\varphi_{2,k}}\Psi_{2,k}\|_{L^{2}(U_{2,k})}^{2}
+\tau^{2}\|\e^{\tau\varphi_{1,k}}\phi\|_{H^{\frac{1}{2}}(\mathcal{I})}^{2}\Big).
\end{multline*}
We recall the expression of $\Psi_{1,k}$ and $\Psi_{2,k}$ in \eqref{Swave21}, then we can write
\begin{multline*}
\tau^{3 }\sum_{j=1,2}\|\e^{\tau\varphi_{j,k}}w_{j,k}\|_{L^{2}(U_{j,k})}^{2}
+\tau\sum_{j=1,2}\|\e^{\tau\varphi_{j,k}}\nabla w_{j,k}\|_{L^{2}(U_{j,k})}^{2}
\\
\leq C\Big(\|\e^{\tau\varphi_{1,k}}\Phi_{1}\|_{L^{2}(U_{1,k})}^{2}+\|\e^{\tau\varphi_{2,k}}\Phi_{2}\|_{L^{2}(U_{2,k})}^{2}
+\|\e^{\tau\varphi_{1,k}}[\Delta,\chi_{1,k}]\widetilde{w}_{1}\|_{L^{2}(U_{1,k})}^{2}  \\
+\|\e^{\tau\varphi_{1,k}}[\Delta,\chi]w_{1}\|_{L^{2}(U_{1,k})}^{2}
+\|\e^{\tau\varphi_{2,k}}[\Delta,\chi_{2,k}]w_{2,k}\|_{L^{2}(U_{2,k})}^{2}+\tau^{2}\|\e^{\tau\varphi_{1,k}}\phi\|_{H^{\frac{1}{2}}(\mathcal{I})}^{2}\Big).
\end{multline*}
Adding the two last estimates and using the property of the weight functions $\varphi_{j,1}<\varphi_{1,2}$ in 
$\ds\bigcup_{i=1}^{N_{j,1}}B(x_{j,1}^{i},2\epsilon)$ and $\varphi_{j,2}<\varphi_{j,1}$ in $\ds\bigcup_{i=1}^{N_{j,2}}B(x_{j,2}^{i},2\epsilon)$ 
for all $j=1,2$, then we can absorb first order the terms $[\Delta,\chi_{1,k}]\widetilde{w}_{1}$ and $[\Delta,\chi_{2,k}]w_{2}$ at the right 
hand side into the left hand side for $\tau>0$  sufficiently large, mainly we obtain
\begin{multline*}
\tau\int_{\Omega_{1}}\left(\e^{2\tau\varphi_{1,1}}+\e^{2\tau\varphi_{1,2}}\right)|\nabla w_{1}|^{2}\,\ud x+\tau\int_{\Omega_{2}}\left(\e^{2\tau\varphi_{2,1}}+\e^{2\tau\varphi_{2,2}}\right)|\nabla w_{2}|^{2}\,\ud x
\\
\leq C\bigg(\int_{\Omega_{1}}\left(\e^{2\tau\varphi_{1,1}}+\e^{2\tau\varphi_{1,2}}\right)|\Phi_{1}|^{2}\,\ud x+\tau\int_{\Omega_{2}}\left(\e^{2\tau\varphi_{2,1}}+\e^{2\tau\varphi_{2,2}}\right)|\Phi_{2}|^{2}\,\ud x
\\
 +\int_{\Omega_{1}}\left(\e^{2\tau\varphi_{1,1}}+\e^{2\tau\varphi_{1,2}}\right)|[\Delta,\chi]w_{1}|^{2}\,\ud x+\tau^{2}\left(\|\e^{\tau\varphi_{1,1}}\phi\|_{H^{\frac{1}{2}}(\mathcal{I})}^{2}+\|\e^{\tau\varphi_{1,2}}\phi\|_{H^{\frac{1}{2}}(\mathcal{I})}^{2}\right)\bigg).
\end{multline*}
Since we can write $\ds\phi=\frac{id\mu}{1+id\mu}w_{1}+\frac{d}{1+id\mu}f_{1}$ then using the trace theorem, Green's formula and the fact that the operator $[\Delta,\chi]$ is of the first order with support in $\omega$ we find
\begin{multline}\label{Swave22}
\tau\int_{\omega}\left(\e^{2\tau\varphi_{1,1}}+\e^{2\tau\varphi_{1,2}}\right)|\nabla w_{1}|^{2}\,\ud x+\tau\int_{\Omega\setminus\omega}\left(\e^{2\tau\varphi_{2,1}}+\e^{2\tau\varphi_{2,2}}\right)|\nabla w_{2}|^{2}\,\ud x
\\
\leq C\bigg(\int_{\omega}\left(\e^{2\tau\varphi_{1,1}}+\e^{2\tau\varphi_{1,2}}\right)|\Phi_{1}|^{2}\,\ud x+\tau\int_{\Omega\setminus\omega}\left(\e^{2\tau\varphi_{2,1}}+\e^{2\tau\varphi_{2,2}}\right)|\Phi_{2}|^{2}\,\ud x
\\
+\tau^{4}\int_{\omega}\left(\e^{2\tau\varphi_{1,1}}+\e^{2\tau\varphi_{1,2}}\right)|w_{1}|^{2}\,\ud x+\tau^{2}\int_{\omega}\left(\e^{2\tau\varphi_{1,1}}+\e^{2\tau\varphi_{1,2}}\right)|\nabla w_{1}|^{2}\,\ud x
\\
+\tau^{4}\int_{\omega}\left(\e^{2\tau\varphi_{1,1}}+\e^{2\tau\varphi_{1,2}}\right)|f_{1}|^{2}\,\ud x+\tau^{2}\int_{\omega}\left(\e^{2\tau\varphi_{1,1}}+\e^{2\tau\varphi_{1,2}}\right)|\nabla f_{1}|^{2}\,\ud x\bigg).
\end{multline}
Using the expression of $\Phi_{1}$ and $\Phi_{2}$,  taking the maximum of $\varphi_{1,1}$, $\varphi_{1,2}$, $\varphi_{2,1}$ and $\varphi_{2,2}$ in the right hand side of \eqref{Swave22} and their minimum in the left hand side and Lemma \ref{lem: Swave8} we follow  
\begin{equation*}
\begin{split}
\|\nabla w_{1}\|_{L^{2}(\omega)}^{2}+\|\nabla w_{2}\|_{L^{2}(\Omega\setminus\omega)}^{2}\leq C\e^{C\tau}\Big(\|f_{1}\|_{L^{2}(\omega)}^{2}+\|\nabla f_{1}\|_{L^{2}(\omega)}^{2}+\|f_{2}\|_{L^{2}(\Omega\setminus\omega)}^{2}
\\
+\|g_{1}\|_{L^{2}(\omega)}^{2}+\|g_{2}\|_{L^{2}(\Omega\setminus\omega)}^{2}+\|\nabla w_{1}\|_{L^{2}(\omega)}^{2}\Big).
\end{split}
\end{equation*}
We evoke $u_{1}$ and $u_{2}$ through the expression of $w_{1}$ and $w_{2}$, one gets
\begin{equation*}
\begin{split}
\|\nabla u_{1}\|_{L^{2}(\omega)}^{2}+\|\nabla u_{2}\|_{L^{2}(\Omega\setminus\omega)}^{2}\leq C\e^{C\tau}\Big(\|f_{1}\|_{L^{2}(\omega)}^{2}+\|\nabla f_{1}\|_{L^{2}(\omega)}^{2}+\|f_{2}\|_{L^{2}(\Omega\setminus\omega)}^{2}
\\
+\|g_{1}\|_{L^{2}(\omega)}^{2}+\|g_{2}\|_{L^{2}(\Omega\setminus\omega)}^{2}+\|\nabla u_{1}\|_{L^{2}(\omega)}^{2}\Big).
\end{split}
\end{equation*}
Using the Poincar\'e inequality then we have
\begin{equation}\label{Swave23}
\|\nabla u\|_{L^{2}(\Omega)}^{2}\leq C\e^{c|\mu|}\left(\|\nabla f\|_{L^{2}(\Omega)}^{2}+\|g\|_{L^{2}(\Omega)}^{2}+\|\nabla u\|_{L^{2}(\omega)}^{2}\right).
\end{equation}
The combination of the two estimates \eqref{Swave4} and \eqref{Swave23} leads to
\begin{equation}\label{Swave25}
\|\nabla u\|_{L^{2}(\Omega)}^{2}\leq C\e^{c|\mu|}\left(\|\nabla f\|_{L^{2}(\Omega)}^{2}+\|g\|_{L^{2}(\Omega)}^{2}\right).
\end{equation}
We can obtain the same estimate as \eqref{Swave25} with the $v$ variable with the $L^{2}$ norm instead of $u$ by using again the Poincar\'e inequality and recalling the expression of $v$ in the first line of \eqref{Swave2} namely, we have
\begin{equation}\label{Swave26}
\|v\|_{L^{2}(\Omega)}^{2}\leq C\e^{c|\mu|}\left(\|\nabla f\|_{L^{2}(\Omega)}^{2}+\|g\|_{L^{2}(\Omega)}^{2}\right).
\end{equation}
So that, the estimate \eqref{Swave24} is obtained by the combination of the two estimates \eqref{Swave25} and \eqref{Swave26}.

\end{document}